\newtheorem{theorem}{Theorem}
\newtheorem{corollary}[theorem]{Corollary}
\newtheorem{definition}[theorem]{Definition}
\newtheorem{example}[theorem]{Example}
\newtheorem{proposition}[theorem]{Proposition}
\begin{document}

\title{Legendre curves and singularities of a ruled surface according to
rotation minimizing frame}
\author{\textsc{Murat Bekar}$^{1\text{,}}$\thanks{%
Corresponding author. \ \ \ \ \ \ \ \ \ \ \ \ \ \ \ \ \ \ \ \ \ \ \ \ \ \ \
\ \ \ \ \ \ \ \ \ \ \ \ \ \ \ \ \ \ \ \ \ \ \ \ \ \ \ \ \ \ \ \ \ \ \ \ \ \
\ \ \ \ \ \ \ \ \ \ \ \ \ \ \ \ \ \ \ \ \ \ \ \ \ \ \ \ \ \ \ \ \ \ \ \ \ \
\ \ \ \ \ \ \ \ \ \ \ \ \ \ \ \ \ \ \ \ \ \ \ \ \ \ \ \ \ \ \ \ \ \ \ \ \ \
\ \ \ \ \ \ \ \ \ \ \ \ \ \ \ \ \ \ \ \ \ \ \ \ \ \ \ \ \ \ \ \ \ \ \ \ \ \
\ \ \ \ \ \ \ \ \ \ \ \ \ \ \ \ \ \ \ \ \ \ \ \ \ \ \ \ \ \ \ \ \ \ \ \ \ \
\ \ \ \ \ \ \ \ \ \ \ \ \ \ \ \ \ \ \ \ \ \ \ \ \ \ \ \ \ \ \ \ \ \ \ \ \ \
\ \ \ \ \ \ \ \ \ \ \ \ \ \ \ \ \ \ \ \ \ \ \ \ \ \ \ \ \ \ \ \ \ \ \ \ \ \
\ \ \ \ \ \ \ \ \ \ \ \ \ \ \ \ \ \ \ \ \ \ \ \ \ \ \ \ \ \ \ \ \ \ \ \ \ \
\ \ \ \ \ \ \ \ \ \ \ \ \ \ \ \ \ \ \ \ \ \ \ \ \ \ \ \ \ \ \ \ \ \ \ \ \ \
\ \ \ \ \ \ \ \ \ \ \ \ \ \ \ \ \ \ \ \ \ \ \ \ \ \ \ \ \ \ \ \ \ \ \ \ \ \
\ \ \ \ \ \ \ \ \ \ \ \ \ \ \ \ \ \ \ \ \ \ \ \ \ \ \ \ \ \ \ \ \ \ \ \ \ \
\ \ \ \ \ \ \ \ \ \ \ \ \ \ \ \ \ \ \ \ \ \ \ \ \ \ \ \ \ \ \ \ \ \ \ \ \ \
\ \ \ \ \ \ \ \ \ \ 2010 2010 \textit{AMS Mathematics Subject Classification:%
} Primary: 53A04, 32S25; Secondary: 14J60.\ \ \ \ \ \ \ \ \ \ \ \ \ \ \ \ \
\ \ \ \ \ \ \ \ \ \ \ \ \ \ \ \ \ \ \ \ \ \ \ \ \ \ \ \ \ \ \ \ \ \ \ \ \ \
\ \ \ \ \ \ \ \ \ \ \ \ \ \ \ \ \ \ \ \ \ \ \ \ \ \ \ \ \ \ \ \ \ \ \ \ \ \
\ \ \ \ \ \ \ \ \ \ \ \ \ \ \ \ \ \ \ \ \ \ \ \ \ \ \ \ \ \ \ \ \ \ \ \ \ \
\ \ \ \ \ \ \ \ \ \ \ \ \ \ \ \ \ \ \ \ \ \ \ \ \ \ \ \ \ \ \ \ \ \ \ \ \ \
\ \ \ \ \ \ \ \ \ \ \ \ \ \ \ \ \ \ \ \ \ \ \ \ \ \ \ \ \ \ \ \ \ \ \ \ \ \
\ \ \ \ \ \ \ \ \ \ \ \ \ \ \ \ \ \ \ \ \ \ \ \ \ \ \ \ \ \ \ \ \ \ \ \ \ \
\ \ \ \ \ \ \ \ \ \ \ \ \ \ \ \ \ \ \ \ \ \ \ \ \ \ \ \ \ \ \ \ \ \ \ \ \ \
\ \ \ \ \ \ \ \ \ \ \ \ \ \ \ \ \ \ \ \ \ \ \ \ \ \ \ \ \ \ \ \ \ \ \ \ \ \
\ \ \ \ \ \ \ \ \ \ \ \ \ \ \ \ \ \ \ \ \ \ \ \ \ \ \ \ \ \ \ \ \ \ \ \ \ \
\ \ \ \ \ \ \ \ \ \ \ \ \ \ \ \ \ \ \ \ \ \ \ \ \ \ \ \ \ \ \ \ \ \ \ \ \ \
\ \ \ \ \ \ \ \ \ \ \ \ \ \ \ \ \ \ \ \ \ \ \ \ \ \ \ \ \ \ \ \ \ \ \ \ \ \
\ \ \ \ \ \ \ \ \ \ \ \ \ \ \ \ \ \ \ \ \ \ \ \ \ \ \ \ \ \ \ \ \ \ \ \ \ \
\ } , \textsc{Fouzi Hathout}$^{2}$, \textsc{Yusuf Yayli}$^{3}$ \and $^{1}$%
{\small Gazi University, Department of Mathematics, 06900 Polatli/Ankara, }%
\textsc{Turkey} \and {\small murat-bekar@hotmail.com } \and $^{2}${\small %
Saida University, Department of Mathematics, 2000 Saida, }\textsc{Algeria}
\and {\small f.hathout@gmail.com } \and $^{3}${\small Ankara University,
Department of Mathematics, 06100 Ankara, }\textsc{Turkey} \and {\small %
yayli@science.ankara.edu.tr }}
\date{}
\maketitle

\begin{abstract}
In this paper, Legendre curves on unit tangent bundle are given using
rotation minimizing (RM) vector fields. Ruled surfaces corresponding to
these curves are represented. Singularities of these ruled surfaces are also
analyzed and classified.

\textbf{Key words:} Rotation minimizing vector field; Tangent bundle of
sphere; Legendre curve; Ruled surface; Singularity.
\end{abstract}

\section{Introduction}

One of the most known orthonormal frame on a space curve is the
Frenet-Serret frame, comprising the tangent vector field $T$, the principal
normal vector field $N$ and the binormal vector field $B=T\times N$. When
this frame is used to orient a body along a path, its angular velocity
vector (known also as the Darboux vector) $W$ satisfies $<W,N>=0$, i.e. it
has no component in the principal normal vector direction. This means that
the body exhibits no instantaneous rotation about the unit normal vector $N$
from point to point along the path.

\medskip

Bishop introduced the rotation minimizing frame (RMF) which is an
alternative to Frenet-Serret frame, see \cite{b}. This alternative frame
does not have an instantaneous rotation about the unit tangent vector field $%
T$. Nowadays, RMF is widely used in mathematical researches and Computer
Aided Geometric Desing, e.g. \cite{a, f, m}.

\medskip

More precisely, in $n$-dimensional Riemannian manifold $\left(
M,g=<,>\right) $, an RMF along a curve $\gamma $ is an orthonormal frame
defined by the tangent vector field $T$ (of the curve $\gamma $ in $M$) and
by $n-1$ normal vector fields $N_{i}$, which do not rotate with respect to
the tangent vector field (i.e., $\nabla _{T}N_{i}$ is proportional to $%
T=\gamma ^{\prime }(s)$, where $\nabla $ is the Levi Civita connection of g$%
) $. This type of a normal vector field along a curve is said to be a
rotation minimizing vector field (RM vector field). Any orthonormal basis $%
\{T(s_{0}),N_{1}(s_{0}),..,N_{n-1}(s_{0})\}$ at a point $\gamma (s_{0})$
defines a unique RMF along the curve $\gamma $. The RMF can be defined at
any situation of the derivatives of the curve $\gamma $. The notion of RMF
particularizes to that of Bishop in Euclidian case, see \cite{e}. The Frenet
type equations of the RMF is given by%
\begin{equation*}
\nabla _{T}T(s)=\overset{n-1}{\underset{i=1}{\sum }}\kappa _{i}(s)N_{i}(s)\
\ \text{and}\ \ \nabla _{T}N_{i}(s)=\kappa _{i}(s)T(s)\text{,}
\end{equation*}%
where $\kappa _{i}(s)$ are called the natural curvatures along the curve $%
\gamma .$

\medskip

On the other hand, Legendre curves (especially in the tangent bundle of $2$%
-sphere, $T\mathbb{S}^{2}$) are studied by many authors, e.g., \cite{h1, h2}%
. We call the pair $\Gamma =(\gamma ,v)\subset T\mathbb{S}^{2}$ satisfying $%
<\gamma ^{\prime },v>=0$ as Legendre curve. We have proved that any two RM
vector fields correspond to a Legendre curve in (the unit tangent bundle of $%
2$-sphere) $UT\mathbb{S}^{2}$ of some curves, see Theorems \ref{101} and \ref%
{102}.

\medskip

In \cite{h2}, we have shown that to any Legendre curve in $T\mathbb{S}^{2}$
corresponds a developable ruled surface. According to RMF along a curve in
3-dimensional manifold, one can define six ruled surfaces. In this study, we
want to describe what the offsetting process does to the local shape of a
curve. In particular, we want to determine what happens to the singularities
on the ruled surfaces which we have considered. We have observed that our
six ruled surfaces can be one of the following according to their
singularities: \textit{Cuspidal edge} $C\times \mathbb{R}$, \textit{%
Swallowtail }SW, \textit{Cuspidal crosscap} CCR or a \textit{cone surface}.

\medskip

This paper is divided into two parts: In Section 2, we give some definitions
and notions about the Legendre curves in $UT\mathbb{S}^{2}$ and about the RM
vector fields. By Theorems \ref{101} and \ref{102}, we give some
relationships between these curves and vector fields. In Section 3, we show
that the ruled surfaces obtained from RMF are developable and we analyze the
singularities of these ruled surfaces.

\bigskip

All curves and manifolds considered in this paper are of class $C^{\infty }$
unless otherwise stated.

\section{Legendre curves and RM vectors fields}

Let $\gamma :I\subset \mathbb{R}\rightarrow M$ be a non-null curve with
arc-length parameter $s$ in three-dimensional Riemannian manifold $(M,g=$ $%
<,>)$. Then, there exists an accompanying three-frame $\left \{
T,N,B\right
\} $ known as the \textit{Frenet-Serret frame} of $\gamma
=\gamma (s)$. In this case, the moving Frenet-Serret formulas in $M$ are
given by
\begin{equation}
\left(
\begin{array}{c}
\nabla _{T}T(s) \\
\nabla _{T}N(s) \\
\nabla _{T}B(s)%
\end{array}%
\right) =\left(
\begin{array}{ccc}
0 & \kappa (s) & 0 \\
-\kappa (s) & 0 & \tau (s) \\
0 & -\tau (s) & 0%
\end{array}%
\right) \left(
\begin{array}{c}
T(s) \\
N(s) \\
B(s)%
\end{array}%
\right) ,  \label{1}
\end{equation}%
where $\kappa (s)$ and $\tau (s)$ are called the \textit{curvature} and the
\textit{torsion} of the curve $\gamma $ at the point $s$, respectively. The
set $\left \{ T,N,B,\kappa ,\tau \right \} $ is also called the \textit{%
Frenet-frame apparatus}.

\begin{definition}
Let $\gamma $ be a curve in $(M,g)$. A normal vector field $N$ over $\gamma $
is said to be a rotation minimizing vector field (RM vector field) if it is
parallel with respect to the normal connection of $\gamma $. This means that
$\nabla _{\gamma ^{\prime }}N$ and $\gamma ^{\prime }$ are proportional.
\end{definition}

A rotation minimizing frame (RMF) along a curve $\gamma =\gamma (s)$ in $%
(M^{3},g)$ is an orthonormal frame defined by tangent vector $T$ and by two
normal vector fields $N_{1}$ and $N_{2}$, which are proportional to $T$. Any
orthonormal basis $\left \{ T,N_{1},N_{2}\right \} $ at a point $\gamma
(s_{0}) $ defines a unique RMF along the curve $\gamma $. Let $\nabla $ be
the Levi Civita connection of the metric $g$. Then, Frenet type equations
read as
\begin{equation}
\left(
\begin{array}{c}
\nabla _{T}T(s) \\
\nabla _{T}N_{1}(s) \\
\nabla _{T}N_{2}(s)%
\end{array}%
\right) =\left(
\begin{array}{ccc}
0 & \kappa _{1}(s) & \kappa _{2}(s) \\
-\kappa _{1}(s) & 0 & 0 \\
-\kappa _{2}(s) & 0 & 0%
\end{array}%
\right) \left(
\begin{array}{c}
T(s) \\
N_{1}(s) \\
N_{2}(s)%
\end{array}%
\right) .  \label{2}
\end{equation}%
Here, the functions $\kappa _{1}(s)$ and $\kappa _{2}(s)$ are called the
\textit{natural curvatures} of RMF given by%
\begin{equation*}
\kappa (s)=\sqrt{\kappa _{1}^{2}(s)+\kappa _{2}^{2}(s)}\ \ \text{and \ \ }%
\tau (s)=\theta ^{\prime }(s)=\frac{\kappa _{1}(s)\kappa _{2}^{\prime
}(s)-\kappa _{1}^{\prime }(s)\kappa _{2}(s)}{\kappa _{1}^{2}(s)+\kappa
_{2}^{2}(s)}\text{,}
\end{equation*}%
where $\theta (s)=\arg (\kappa _{1}(s),\kappa _{2}(s))=\arctan \frac{\kappa
_{2}(s)}{\kappa _{1}(s)}$ and $\theta ^{\prime }(s)$ is the derivative of $%
\theta (s)$ with respect to the arc-length.

\bigskip

If $(M,g)$ is the Euclidean 3-space $(\mathbb{R}^{3},<,>)$, then the notion
of RMF particularizes to that of Bishop frame.

\bigskip

Let $\mathbb{S}^{2}$ be the unit 2-sphere in $\mathbb{R}^{3}$. Then, the
tangent bundle of $\mathbb{S}^{2}$ is given by
\begin{equation*}
T\mathbb{S}^{2}=\{(\gamma ,v)\in \mathbb{R}^{3}\times \mathbb{R}^{3}\text{ }:%
\text{ }\left\vert \gamma \right\vert =1\ \text{and }<\gamma ,v>=0\}
\end{equation*}%
and the unit tangent bundle of $\mathbb{S}^{2}$ is given by
\begin{eqnarray}
UT\mathbb{S}^{2} &=&\{(\gamma ,v)\in \mathbb{R}^{3}\times \mathbb{R}^{3}%
\text{ }:\text{ }\left\vert \gamma \right\vert =\left\vert v\right\vert =1\
\text{and }<\gamma ,v>=0\} \\
&=&\{(\gamma ,v)\in \mathbb{S}^{2}\times \mathbb{S}^{2}\text{ }:\text{ }%
<\gamma ,v>=0\}  \notag
\end{eqnarray}%
which is a $3$-dimensional contact manifold and its canonical contact $1$%
-form is $\theta ,$ where $<,>$ and $\left\vert ,\right\vert $ denotes the
usual \textit{inner product} and \textit{norm} in $\mathbb{R}^{3}$,
respectively. For further information see \cite{h1, t}

\bigskip

In general, in any Riemannian manifold a curve $\gamma $ is said to be
\textit{Legendre} if it is an integral curve of the contact distribution $%
D=\ker \theta $, i.e. $\theta (\gamma ^{\prime })=0$, see \cite{bb}. In
particular, Legendre curves in 3-dimensional contact manifold $UT\mathbb{S}%
^{2}$ on $\mathbb{S}^{2}$ can be given by the following definition:

\begin{definition}
The smooth curve
\begin{equation*}
\Gamma (s)=(\gamma (s),v(s)):I\subset \mathbb{R}\rightarrow UT\mathbb{S}%
^{2}\subset \mathbb{S}^{2}\times \mathbb{S}^{2}
\end{equation*}%
is called \textit{Legendre curve} in $UT\mathbb{S}^{2}$ if
\begin{equation}
<\gamma ^{\prime }(s),v(s)>=0\text{.}  \label{3}
\end{equation}
\end{definition}

The Legendre curve condition in $UT\mathbb{S}^{2}$ can be seen in \cite{hd}
as a definition of $\Delta $-dual to each other in $\mathbb{S}^{2}$. By the
following theorem we give the relationship between RM vector fields and the
Legendre curve conditions in $UT\mathbb{S}^{2}$:

\begin{theorem}
\label{101}Let $\gamma :I\subset \mathbb{R}\rightarrow \mathbb{S}^{2}$ be a
regular unit speed curve with the frame apparatus $\left \{ T,N,B,\kappa
,\tau \right \} $. Then, we have the following assertions:
\end{theorem}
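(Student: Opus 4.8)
The plan is to push everything through the Frenet--Serret equations \eqref{1} for $\gamma$ regarded as a curve in $\mathbb{R}^{3}$, together with the single extra relation that comes from $\gamma\subset\mathbb{S}^{2}$. Differentiating $\langle\gamma,\gamma\rangle=1$ once gives $\langle T,\gamma\rangle=0$, and differentiating again gives $\langle\kappa N,\gamma\rangle=-1$, i.e. $\langle N,\gamma\rangle=-1/\kappa$ (in particular $\kappa$ never vanishes, and in fact $\kappa\geq 1$). Since $\gamma$ is a unit vector orthogonal to $T$, expanding it in the orthonormal basis $\{T,N,B\}$ with $\langle\gamma,T\rangle=0$, $\langle\gamma,N\rangle=-1/\kappa$, $|\gamma|=1$ forces
\[
\gamma=-\frac{1}{\kappa}\,N+\varepsilon\,\frac{\sqrt{\kappa^{2}-1}}{\kappa}\,B,\qquad
\gamma\times T=\varepsilon\,\frac{\sqrt{\kappa^{2}-1}}{\kappa}\,N+\frac{1}{\kappa}\,B,
\]
for a fixed sign $\varepsilon=\pm 1$ (the orientation of $\gamma\times T$ against $\{N,B\}$). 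This is the one step where the spherical hypothesis really enters; every assertion should then be read off from these formulas.

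Next I would verify that $\{T,\gamma,\gamma\times T\}$ is an orthonormal frame along $\gamma$ and that it is rotation minimizing. The key observation is that $\nabla_{T}\gamma=\gamma'=T$ is trivially a multiple of $T$, while $\nabla_{T}(\gamma\times T)=\gamma'\times T+\gamma\times T'=\kappa\,(\gamma\times N)$, and by the expansion above $\gamma\times N$ has only a $T$-component; hence $\nabla_{T}(\gamma\times T)$ is again a multiple of $T$. So $\gamma$ and $\gamma\times T$ are RM vector fields, the triple $\{T,\gamma,\gamma\times T\}$ obeys Frenet-type equations of the shape \eqref{2}, and comparing coefficients gives natural curvatures $\kappa_{1}=-1$ and $\kappa_{2}=\varepsilon\sqrt{\kappa^{2}-1}$, in agreement with $\kappa=\sqrt{\kappa_{1}^{2}+\kappa_{2}^{2}}$ and with $\tau=\theta'$ once $\theta=\arg(\kappa_{1},\kappa_{2})$ is computed. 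Any other RMF along $\gamma$ is obtained from this canonical one by a constant rotation in the normal plane, so the assertions transfer to it.

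For the Legendre statements I would use the following general mechanism, valid for any pair $X_{1},X_{2}$ of RM vector fields along a unit-speed curve that form an orthonormal pair: the map $s\mapsto(X_{1}(s),X_{2}(s))$ lands in $UT\mathbb{S}^{2}$ since $|X_{1}|=|X_{2}|=1$ and $\langle X_{1},X_{2}\rangle=0$, and it satisfies the Legendre condition \eqref{3} because $\langle X_{1}',X_{2}\rangle=0$ --- indeed $X_{1}'=\nabla_{T}X_{1}$ is proportional to $T$, which is orthogonal to $X_{2}$. Applying this to the pair coming from $\{T,\gamma,\gamma\times T\}$, and to its reordering, produces the claimed Legendre curves in $UT\mathbb{S}^{2}$; substituting the explicit formulas above then rewrites their invariants in terms of $\kappa$ and $\tau$.

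The routine part here is pure algebra; the only genuine care needed is bookkeeping of the sign $\varepsilon$ (equivalently the orientation of $\gamma\times T$), and making sure the structure equations I derive are written in exactly the normalization of \eqref{2}, so that the stated values of $\kappa_{1},\kappa_{2}$, of the contact angle $\theta$, and of $\tau=\theta'$ come out with the correct signs. I would fix $\varepsilon$ once at the outset and propagate it consistently through all the assertions.
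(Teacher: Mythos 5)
Your third paragraph is the whole proof, and it is exactly the paper's argument: for an orthonormal pair of RM vector fields $X_{1},X_{2}$ along a unit-speed curve, $X_{1}'=\nabla_{T}X_{1}$ is proportional to the tangent, which is orthogonal to $X_{2}$, so $\langle X_{1}',X_{2}\rangle=0$ and the pair satisfies the Legendre condition (\ref{3}). The paper carries out precisely this computation, reading $\langle N_{1}',N_{2}\rangle=-\kappa_{1}\langle T,N_{2}\rangle=0$ off the RMF equations (\ref{2}) (and their analogue (\ref{4}) for the $B$-direction curve). The spherical-curve analysis in your first two paragraphs --- the expansion $\gamma=-\kappa^{-1}N+\varepsilon\,\kappa^{-1}\sqrt{\kappa^{2}-1}\,B$, the canonical RMF $\{T,\gamma,\gamma\times T\}$ and its natural curvatures --- is correct as far as it goes (modulo smoothness of $\sqrt{\kappa^{2}-1}$ and of the sign $\varepsilon$ at points where $\kappa=1$), but it is not needed: the hypotheses of all three assertions already hand you the RM vector fields, so there is nothing to construct. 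It is also aimed at the wrong base curve for Assertions 2 and 3, which concern RM fields along the direction curves $\int B\,ds$ and $\int N\,ds$ rather than along $\gamma\subset\mathbb{S}^{2}$; there your general mechanism applies directly (e.g.\ $B'=-\tau N$ and $T'=\kappa N$ are both proportional to the tangent $N$ of $\int N\,ds$), and the spherical decomposition plays no role. In short: same proof at the core, wrapped in a detour you can delete.
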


\begin{enumerate}
\item \textit{If }$N_{1}(s)$\textit{\ and }$N_{2}(s)$\textit{\ are RM vector
fields along }$\gamma $\textit{, the curve }$(N_{1}(s),N_{2}(s))$\textit{\
is Legendre in} $UT\mathbb{S}^{2}$.

\item \textit{If }$\overline{N}_{1}(s)$\textit{\ and }$\overline{N}_{2}(s)$%
\textit{\ are RM vectors along }$B$\textit{-direction curve }$\overline{%
\beta }(s)=\int B(s)ds,$\textit{\ the curve }$(\overline{N}_{1}(s),\overline{%
N}_{2}(s))$\textit{\ is Legendre in }$UTS^{2}$\textit{.}

\item \textit{If }$B(s)$\textit{\ and }$T(s)$\textit{\ are RM vector fields
along }$N$\textit{-direction curve }$\beta (s)=\int N(s)ds,$\textit{\ the
curve }$(B(s),T(s))$\textit{\ is Legendre in }$UTS^{2}$\textit{.}
\end{enumerate}

\begin{proof}
Assume that $\gamma :I\subset \mathbb{R}\rightarrow \mathbb{S}^{2}$ is a
regular unit speed curve with the frame apparatus $\left \{ T,N,B,\kappa
,\tau \right \} $. Then,

\begin{enumerate}
\item Consider the curve $\Gamma (s)=(N_{1}(s),N_{2}(s))\in UT\mathbb{S}^{2}$%
. Since $N_{1}(s)$ and $N_{2}(s)$ are RM vector fields along $\gamma (s)$,
from Equation (\ref{2}) we get that
\begin{equation*}
<N_{1}^{\prime }(s),N_{2}(s)>=-\kappa _{1}(s)<T(s),N_{2}(s)>=0\text{.}
\end{equation*}%
Thus, from Equation (\ref{3}) we can say that $\Gamma $ is a Legendre curve
in $UT\mathbb{S}^{2}.$

\item Consider the curve $\Gamma (s)=(\overline{N}_{1}(s),\overline{N}%
_{2}(s))\in UT\mathbb{S}^{2}$ along the $B$-direction curve $\overline{\beta
}(s)$. The Frenet type equations can be given as
\begin{equation}
\left(
\begin{array}{c}
B^{\prime }(s) \\
\overline{N}_{1}^{\prime }(s) \\
N_{2}^{\prime }(s)%
\end{array}%
\right) =\left(
\begin{array}{ccc}
0 & \bar{\kappa}_{1}(s) & \bar{\kappa}_{2}(s) \\
-\bar{\kappa}_{1}(s) & 0 & 0 \\
-\bar{\kappa}_{2}(s) & 0 & 0%
\end{array}%
\right) \left(
\begin{array}{c}
B(s) \\
\overline{N}_{1}(s) \\
\overline{N}_{2}(s)%
\end{array}%
\right)  \label{4}
\end{equation}%
with the natural curvatures%
\begin{equation*}
\bar{\kappa}(s)=\sqrt{\bar{\kappa}_{1}^{2}(s)+\bar{\kappa}_{2}^{2}(s)}\  \  \
\text{and \ }\bar{\tau}(s)=\theta ^{\prime }(s)=\frac{\bar{\kappa}%
_{1}^{\prime }(s)\bar{\kappa}_{2}(s)-\bar{\kappa}_{1}^{\prime }(s)\bar{\kappa%
}_{2}(s)}{\bar{\kappa}_{1}^{2}(s)+\bar{\kappa}_{2}^{2}(s)}\text{.}
\end{equation*}%
From Equation (\ref{4}), we get that
\begin{equation*}
<\overline{N}_{1}^{\prime }(s),\overline{N}_{2}(s)>=-\bar{\kappa}%
_{1}(s)<B(s),\overline{N}_{2}(s)>=0\text{.}
\end{equation*}%
Thus, from Equation (\ref{3}), we can say that $\Gamma $ is a Legendre curve
in $UT\mathbb{S}^{2}.$ The proof of Assertion 3 can be given by the similar
way as Assertions 1 and 2.
\end{enumerate}
\end{proof}

From the definition of the set $UT\mathbb{S}^{2}$, we know that for a smooth
curve $\Gamma (s)=\left( \gamma (s),v(s)\right) $ in $T\mathbb{S}^{2}$ it is
$<\gamma (s),v(s)>=0$. Thus, we can define a new frame using the unit vector
$\eta (s)=\gamma (s)\wedge v(s)$, where $\wedge $ denotes the usual \textit{%
vector product} in $\mathbb{R}^{3}.$ It is obvious that $<\gamma (s),\eta
(s)>$ $=$ $<v(s),\eta (s)>=0$. Hence, we get the following Frenet frame $\{
\gamma (s),v(s),\eta (s)\}$ along $\gamma (s)$;%
\begin{equation}
\left(
\begin{array}{c}
\gamma ^{\prime }(s) \\
v^{\prime }(s) \\
\eta ^{\prime }(s)%
\end{array}%
\right) =\left(
\begin{array}{ccc}
0 & l(s) & m(s) \\
-l(s) & 0 & n(s) \\
-m(s) & -n(s) & 0%
\end{array}%
\right) \left(
\begin{array}{c}
\gamma (s) \\
v(s) \\
\eta (s)%
\end{array}%
\right) \text{,}  \label{5}
\end{equation}%
where $l(s)=<\gamma ^{\prime }(s),v(s)>,\ m(s)=<\gamma ^{\prime }(s),\mu
(s)> $, $n(s)=<v^{\prime }(s),\mu (s)>.$ The triple $\left \{ l,m,n\right \}
$ is called the \textit{curvature functions }of $\Gamma $.

\bigskip

We know that, if $l(s)=0,$the curve $\Gamma (s)=(\gamma (s),v(s))$ is
Legendre in $UT\mathbb{S}^{2}$ with the curvature functions $\left(
m,n\right) $.

\begin{theorem}
\label{102}Let $\Gamma (s)=(\gamma (s),v(s))$ be a smooth curve in $UT%
\mathbb{S}^{2}$. If $\Gamma (s)$ is Legendre, the vectors $\gamma (s)$ and $%
v(s)$ are RM vector fields along the $\eta $-direction curve $\beta $, i.e. $%
\beta (s)=\int \eta (s)ds),$ and the triple vector field set $\left \{
\gamma ,v,\eta \right \} $ is an RMF.
\end{theorem}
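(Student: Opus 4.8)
The plan is to start from the Legendre hypothesis $l(s)=\langle\gamma'(s),v(s)\rangle=0$ and feed this into the structure equations (\ref{5}). With $l\equiv 0$, the frame equations collapse to
\begin{equation*}
\gamma'(s)=m(s)\,\eta(s),\qquad v'(s)=n(s)\,\eta(s),\qquad \eta'(s)=-m(s)\,\gamma(s)-n(s)\,v(s).
\end{equation*}
The first thing I would observe is that $\gamma'$ and $v'$ are both parallel to $\eta$; hence if we set $\beta(s)=\int \eta(s)\,ds$, so that $\beta'(s)=\eta(s)$ is the unit tangent of the curve $\beta$, then $\gamma$ and $v$ are unit normal vector fields along $\beta$ (they are orthogonal to $\eta=\beta'$ by the defining relations of $UT\mathbb{S}^2$ together with $\eta=\gamma\wedge v$, and they have unit length on $\mathbb{S}^2$).

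Next I would check the rotation-minimizing condition for each of $\gamma$ and $v$ as normal fields along $\beta$. By definition a normal field $N$ along $\beta$ is RM iff $\nabla_{\beta'}N$ is proportional to $\beta'$, equivalently iff the derivative of $N$ has vanishing component along every normal direction. Here $\gamma'(s)=m(s)\eta(s)=m(s)\beta'(s)$ is manifestly a multiple of $\beta'$, and likewise $v'(s)=n(s)\eta(s)=n(s)\beta'(s)$; so both $\gamma$ and $v$ are RM vector fields along $\beta$. Equivalently, $\langle\gamma',v\rangle=0$ and $\langle v',\gamma\rangle=0$, which is exactly the ``no rotation between the two normals'' condition characterizing an RMF.

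Finally, to conclude that $\{\gamma,v,\eta\}$ is an RMF along $\beta$, I would note that this ordered triple is orthonormal (again by the definition of $UT\mathbb{S}^2$ and $\eta=\gamma\wedge v$), that $\eta=\beta'$ is the tangent, and that the two normals $\gamma,v$ have just been shown not to rotate about $\eta$; comparing the displayed system above with the Frenet-type equations (\ref{2}) of an RMF (with the roles $T\leftrightarrow\eta$, $N_1\leftrightarrow\gamma$, $N_2\leftrightarrow v$, and natural curvatures $\kappa_1=-m$, $\kappa_2=-n$) shows the matrix is exactly of RMF form. I do not anticipate a genuine obstacle here — the argument is essentially unwinding definitions once $l\equiv0$ is substituted; the only point requiring a little care is making the identification with the RMF equations (\ref{2}) precise, i.e. matching the sign conventions and the ordering of the frame so that the skew-symmetric matrix has the required shape, and confirming that $\beta$ is regular (which holds since $|\beta'|=|\eta|=1$) so that ``RMF along $\beta$'' makes sense.
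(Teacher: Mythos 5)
Your argument is correct and follows essentially the same route as the paper: substitute the Legendre condition $l\equiv 0$ into the frame equations (\ref{5}) to obtain the reduced system (\ref{6}), observe that $\gamma'$ and $v'$ are multiples of $\eta=\beta'$, and identify the result with the RMF equations (\ref{2}). Your extra care about orthonormality of $\{\gamma,v,\eta\}$, the regularity of $\beta$, and the sign matching $\kappa_1=-m$, $\kappa_2=-n$ only makes explicit what the paper leaves implicit.
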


\begin{proof}
Let $\Gamma (s)=(\gamma (s),v(s))$ be a smooth Legendre curve in $UT\mathbb{S%
}^{2}.$ Then, the frenet frame Equation (\ref{5}\textbf{)} for Legendre
condition $($that is, $l(s)=0)$ can be given by%
\begin{equation}
\left(
\begin{array}{c}
\eta ^{\prime }(s) \\
\gamma ^{\prime }(s) \\
v^{\prime }(s)%
\end{array}%
\right) =\left(
\begin{array}{ccc}
0 & -m(s) & -n(s) \\
m(s) & 0 & 0 \\
n(s) & 0 & 0%
\end{array}%
\right) \left(
\begin{array}{c}
\eta (s) \\
\gamma (s) \\
v(s)%
\end{array}%
\right) \text{.}  \label{6}
\end{equation}%
From Equation (\ref{2}\textbf{)}, we can say that $\left \{ \eta ,\gamma
,v\right \} $ is an RMF along the $\eta $-direction curve $\beta (s)=\int
\eta (s)ds$.
\end{proof}

\section{Singularities of ruled surface according to RMF}

A \textit{ruled surface} in $\mathbb{R}^{3}$ is locally the map%
\begin{equation*}
\Phi _{\left( \beta ,\alpha \right) }:I\times \mathbb{R\longrightarrow R}^{3}
\end{equation*}%
defined by%
\begin{equation*}
\Phi _{\left( \beta ,\alpha \right) }(s,u)=\beta (s)+u\alpha (s)\text{,}
\end{equation*}%
where $\beta $ and $\alpha $ are smooth mappings defined from an open
interval $I$ (or a unit circle $\mathbb{S}^{1}$) to $\mathbb{R}^{3}$. $\beta
$ is the \textit{base curve} (or \textit{directrix}) and the non-null curve $%
\alpha $ is the \textit{director curve. }The straight lines\textit{\ }$u%
\mathbb{\longrightarrow }\beta (s)+u\alpha (s)$ are the \textit{rulings.}

\bigskip

The \textit{striction curve} of the ruled surface $\Phi _{\left( \beta
,\alpha \right) }(s,u)=\beta (s)+u\alpha (s)$ is defined by%
\begin{equation}
\bar{\beta}(s)=\beta (s)-\frac{\left \langle \beta ^{\prime }(s),\alpha
^{\prime }(s)\right \rangle }{\left \langle \alpha ^{\prime }(s),\alpha
^{\prime }(s)\right \rangle }\alpha (s).
\end{equation}%
If $\left \langle \beta ^{\prime }(s),\alpha ^{\prime }(s)\right \rangle $ $%
=0, $ the striction curve $\bar{\beta}(s)$\ coincides with the base curve $%
\beta (s)$.

\bigskip

A ruled surface $\Phi _{\left( \beta ,\alpha \right) }(s,u)=\beta
(s)+u\alpha (s)$ is said to be \textit{developable} if%
\begin{equation*}
\det \left( \beta ^{\prime }(s),\alpha (s),\alpha ^{\prime }(s)\right) =0%
\text{.}
\end{equation*}

From Theorem \ref{102}, we can say that if $\Gamma $ is a Legendre curve,
the vector set $\left \{ \eta ,\gamma ,v\right \} $ is an RMF along the $%
\eta $-direction curve $\beta (s)=\int \eta (s)ds$. One can define by this
frame the following six ruled surfaces:
\begin{equation}
\Phi _{_{\left( a_{1i},a_{2i}\right) }}(s,u)=a_{1i}(s)+u_{i}a_{2i}(s);\ \ \
\text{\ for \ }i=1,\text{..., }6  \label{8}
\end{equation}%
where $a_{1i}(s)$ and $a_{2i}(s)$ are different unit curves from the set $%
\left \{ \beta (s),\gamma (s),v(s)\right \} .$

\begin{proposition}
\label{103}Ruled surfaces $\Phi _{_{\left( a_{1i},a_{2i}\right) }}(s,u)$,
for $i=1,...,$ $6$, given by Equation (\ref{8}) are developable.
\end{proposition}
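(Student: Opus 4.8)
The plan is to reduce all six cases to a single observation about the velocity vectors of the three basis curves. From Theorem \ref{102} and its Frenet-type system (\ref{6}), together with the definition $\beta(s)=\int \eta(s)\,ds$, one reads off directly
\begin{equation*}
\beta'(s)=\eta(s),\qquad \gamma'(s)=m(s)\,\eta(s),\qquad v'(s)=n(s)\,\eta(s).
\end{equation*}
Thus every one of the three curves $\beta,\gamma,v$ has a velocity vector that is a scalar multiple of the \emph{single} vector field $\eta$. I would state and emphasize this as the key lemma of the argument.

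Next I would invoke the developability criterion recalled just above the proposition: a ruled surface $\Phi_{(\beta,\alpha)}(s,u)=\beta(s)+u\alpha(s)$ is developable precisely when $\det(\beta'(s),\alpha(s),\alpha'(s))=0$. For each $i=1,\dots,6$, the base curve $a_{1i}$ and the director curve $a_{2i}$ are two distinct members of $\{\beta,\gamma,v\}$, so by the previous paragraph both $a_{1i}'(s)$ and $a_{2i}'(s)$ are proportional to $\eta(s)$, hence proportional to each other. Two of the three column vectors in $\det\big(a_{1i}'(s),\,a_{2i}(s),\,a_{2i}'(s)\big)$ are therefore linearly dependent, so the determinant vanishes identically in $s$. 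By the criterion, $\Phi_{(a_{1i},a_{2i})}$ is developable.

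To make the write-up concrete I would still list the six ordered pairs $(\beta,\gamma),(\beta,v),(\gamma,\beta),(\gamma,v),(v,\beta),(v,\gamma)$ and, for one representative case, exhibit the computation, e.g. $\det(\gamma',v,v')=\det(m\eta,v,n\eta)=mn\det(\eta,v,\eta)=0$, then remark that the other five are identical in form.

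There is essentially no hard step here: the only point to watch is that the list of candidate base/director curves is $\{\beta,\gamma,v\}$ and does \emph{not} include $\eta$ itself — which is fortunate, since $\eta'=-m\gamma-nv$ is in general \emph{not} parallel to $\eta$, so a ruled surface involving $\eta$ as a ruling need not be developable. Since $\eta$ never occurs as $a_{1i}$ or $a_{2i}$, the uniform argument above covers all six surfaces, and the proposition follows.
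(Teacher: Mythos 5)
Your proof is correct and follows essentially the same route as the paper's: both reduce the claim to the vanishing of $\det\left(a_{1i}'(s),a_{2i}(s),a_{2i}'(s)\right)$, which holds because $\beta'=\eta$, $\gamma'=m\eta$ and $v'=n\eta$ are all parallel to the single vector field $\eta$. The paper only writes out the case $(\beta,\gamma)$ and declares the remaining five ``similar,'' so your explicit one-lemma treatment of all six surfaces is just a more uniform presentation of the same computation.
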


\begin{proof}
Let $\Phi _{_{\left( a_{11},a_{21}\right) }}(s,u)=\beta (s)+u\gamma (s)$ be
a ruled surface defined by Equation (\ref{8}). Using Equation (\ref{6}), we
get the developability condition of $\Phi _{_{\left( a_{11},a_{21}\right) }}$%
;%
\begin{equation*}
\det (\beta ^{\prime }(s),\gamma (s),\gamma ^{\prime }(s))=\det (\eta
(s),\gamma (s),m(s)\eta (s))=0.
\end{equation*}%
Proof of the other ruled surfaces $\Phi _{_{\left( a_{1i},a_{2i}\right) }}$
for $i=2,...,$ $6$ can be given by the similar way.
\end{proof}

Now, recall the parametric equations of the surfaces \textit{Cuspidal edge},
\textit{Swallowtail }and \textit{Cuspidal crosscap} in $\mathbb{R}^{3}$
given by Figure 1, see \cite{it}:

\begin{enumerate}
\item \textit{Cuspidal edge}: $C\times \mathbb{R}=\left \{ (x_{1},x_{2})%
\text{ };\text{ }x_{1}^{2}=x_{2}^{3}\right \} \times \mathbb{R}$.

\item \textit{Swallowtail}: SW$=\left \{ (x_{1},x_{2},x_{3})\text{ };\text{ }%
x_{1}=3u^{4}+u^{2}v\text{,}\ x_{2}=4u^{3}+2uv\text{, }x_{3}=v\right \} $.

\item \textit{Cuspidal crosscap}: CCR$=\left\{ (x_{1},x_{2},x_{3})\text{ };%
\text{ }x_{1}=u^{3}\text{, \ }x_{2}=u^{3}v^{3}\text{, }\ x_{3}=v^{2}\right\}
$.
\end{enumerate}

\begin{figure}[h]
\centering
% Requires \usepackage{graphicx}
\includegraphics[width=4.4815in,height=1.20377in]{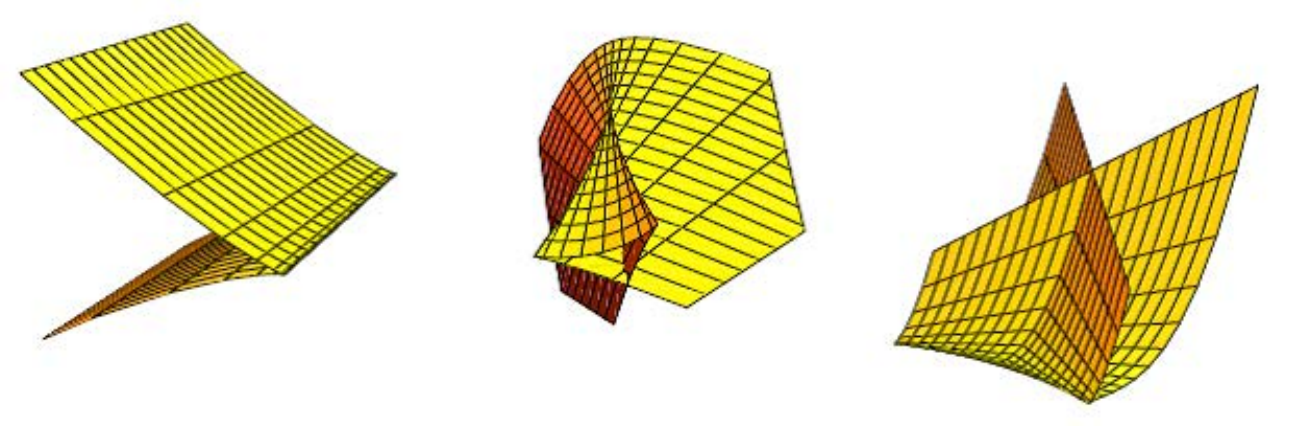}
\caption{\textmd{Left surface is the \textit{Cuspidal edge} $C\times \mathbb{%
R}$, middle surface is the \textit{Swallowtail }SW and right surface is the
\textit{Cuspidal crosscap} CCR.}}
\label{}
\end{figure}

\bigskip

By the following theorem, we give the local classification of singularities
of the ruled surfaces defined by using Equation (\ref{8}):

\begin{theorem}
\label{104}Let $\Gamma (s)=(\gamma (s),v(s))$ be a smooth Legendre curve in $%
UT\mathbb{S}^{2}.$ According to RMF $\left \{ \eta ,\gamma ,v\right \} $
along the $\eta $-direction curve $\beta (s)$, we have the following:
\end{theorem}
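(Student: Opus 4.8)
The plan is to analyze each of the six ruled surfaces $\Phi_{(a_{1i},a_{2i})}$ in turn, using the structure equations \eqref{6} to reduce the singularity question to a standard criterion. Recall that by Proposition \ref{103} every one of these surfaces is developable, so it is the tangent developable (or cone/cylinder degeneration) of its striction curve; the classification of singularities of such surfaces is governed by well-known criteria in terms of the curvature functions $m(s)$ and $n(s)$ of the Legendre curve $\Gamma$. Concretely, I would first compute $\partial_s\Phi$ and $\partial_u\Phi$ for each pair, form the cross product, and identify the \emph{singular set} $\Sigma(\Phi)$ as the locus where $\partial_s\Phi\times\partial_u\Phi=0$. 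For a developable ruled surface $\beta(s)+u\,\alpha(s)$ this typically forces either $u$ to take a specific value $u_0(s)$ along each ruling (giving a curve of singular points, the edge of regression) or, when $\alpha'\equiv 0$, the whole surface to be a cone with its single vertex singular.

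The heart of the argument is then to apply the recognition criteria of Izumiya--Saji and collaborators (cited as \cite{it}): a germ is a \emph{cuspidal edge} $C\times\mathbb{R}$ iff the singular point is of corank one with the kernel direction transverse to the singular set and a certain second-order nondegeneracy holds; it is a \emph{swallowtail} SW iff instead the kernel direction is tangent to the singular set but with a nonvanishing next derivative; and it is a \emph{cuspidal crosscap} CCR under the analogous higher-order condition. I would carry this out by writing, for each surface, the singular value set as a curve $s\mapsto \Phi(s,u_0(s))$, differentiating to locate where its velocity vanishes (i.e. where $u_0'(s)$ or the curvature functions vanish), and reading off which of the three normal forms applies in terms of simple conditions like $m(s_0)\ne 0$, $m'(s_0)\ne 0$, $n(s_0)=0$, etc. For the surfaces whose director curve is $\gamma$, $v$, or $\eta$ with constant coefficients in \eqref{6}, one of them will have $\alpha'\equiv 0$ along suitable parameter values and degenerate to a \emph{cone surface}; that case is handled separately by exhibiting the vertex.

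The main obstacle I anticipate is bookkeeping rather than conceptual: there are six surfaces, each requiring its own computation of the singular locus from \eqref{6}, and one must be careful about which of $\{\beta,\gamma,v\}$ plays the role of base and which the role of director, since the structure equations are not symmetric in $\eta$ versus $\gamma,v$. In particular the developability determinant vanishing (Proposition \ref{103}) already tells us $\beta'=\eta$ lies in the plane spanned by the ruling data, which is exactly what makes the striction-curve reduction work, but verifying the \emph{nondegeneracy} hypotheses of each normal form — e.g. that the relevant Jacobian has rank one and not zero, or that the kernel vector field is transverse/tangent to $\Sigma$ in the precise sense required — will demand care at the exceptional points where $m$, $n$, or their derivatives vanish. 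I would organize the proof as a table: for each $i$, the base curve, the director curve, the singular set $\Sigma$, the condition distinguishing cuspidal edge from swallowtail from cuspidal crosscap, and (where applicable) the cone degeneration, then verify the three generic cases with the Izumiya--Saji criteria and treat the cone case by direct inspection.
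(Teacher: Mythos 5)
Your overall strategy for parts 1 and 2 is exactly the paper's: compute $\partial_s\Phi$ and $\partial_u\Phi$ from the structure equations (\ref{6}), observe that for $\Phi_{(\beta,\gamma)}$ one gets $\partial_s\Phi=(1+u\,m(s))\eta$ and $\partial_u\Phi=\gamma$, so the singular locus is $u=-1/m(s)$, and then invoke the recognition criteria of Theorem 3.3 of \cite{it} to separate the cuspidal edge case ($u_0'\neq 0$, i.e. $m'(s_0)\neq 0$) from the swallowtail case ($u_0'=0$, $(1/m)''(s_0)\neq 0$). Two scoping remarks: Theorem \ref{104} concerns only the two surfaces with base curve $\beta$ (the other four are deferred to Theorems \ref{105} and \ref{106}), and note that no cuspidal crosscap occurs here because the singular value $u_0=-1/m(s_0)$ can never equal $0$, unlike in Theorems \ref{105} and \ref{106}.

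There is, however, a genuine error in your treatment of the cone case (Assertion 3). You attribute the degeneration to a cone to the condition $\alpha'\equiv 0$ for the director curve. That is not the right mechanism: by (\ref{6}) the director $\gamma$ satisfies $\gamma'=m\,\eta$, so $\gamma'\equiv 0$ forces $m\equiv 0$, in which case $1+u\,m\equiv 1$ and the surface has \emph{no} singular points at all (and a constant director gives a cylinder, not a cone, in any event). The correct argument, which the paper gives, is that the singular locus is the curve $\varphi(s)=\Phi(s,-1/m(s))=\beta(s)-\gamma(s)/m(s)$, whose velocity is $\varphi'(s)=-(1/m)'(s)\,\gamma(s)$ after the terms $\eta-(1/m)\,m\,\eta$ cancel; when $m$ is a (nonzero) constant this velocity vanishes identically, so the entire singular set collapses to a single point, the vertex of a cone. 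You need this striction-curve computation, not a hypothesis on $\alpha'$, to close Assertion 3.
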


\begin{enumerate}
\item $\Phi _{\left( \beta ,\gamma \right) }(s,u)=\beta (s)+u\gamma (s)$%
\textit{\ which is locally diffeomorphic to;}

\begin{enumerate}
\item $C\times R\ $\textit{at }$\Phi _{\left( \beta ,\gamma \right)
}(s_{0},u_{0})$\textit{\ if and only if }$u_{0}=-m(s_{0})^{-1}\neq 0$\textit{%
\ and }$m^{\prime }(s_{0})\neq 0.$

\item $SW$\textit{\ at }$\Phi _{\left( \beta ,\gamma \right) }(s_{0},u_{0})$%
\textit{\ if and only if }$u_{0}=-m(s_{0})^{-1}\neq 0$\textit{, }$m^{\prime
}(s_{0})=0$\textit{\ and }$(m(s_{0})^{-1})^{\prime \prime }(s_{0})\neq 0$%
\textit{.}
\end{enumerate}

\item $\Phi _{\left( \beta ,v\right) }(s,u)=\beta (s)+uv(s)$\textit{\ which
is locally diffeomorphic to;}

\begin{enumerate}
\item $C\times R\ $\textit{\ at }$\Phi _{\left( \beta ,v\right)
}(s_{0},u_{0})$\textit{\ if and only if }$u_{0}=-n(s_{0})^{-1}\neq 0$\textit{%
\ and }$u^{\prime }(s_{0})\neq 0.$

\item $SW$\textit{\ at }$\Phi _{\left( \beta ,v\right) }(s_{0},u_{0})$%
\textit{\ if and only if }$u_{0}=-n(s_{0})^{-1}\neq 0$\textit{, }$n^{\prime
}(s_{0})=0$\textit{\ and }$(n(s_{0})^{-1})^{\prime \prime }(s_{0})\neq 0$%
\textit{.}
\end{enumerate}

\item $\Phi _{\left( \beta ,\gamma \right) }(s,u)=\beta (s)+u\gamma (s)$%
\textit{\ (resp. }$\Phi _{\left( \beta ,v\right) }(s,u)=\beta (s)+uv(s)$%
\textit{) which is a cone surface if and only if }$m(s)$\textit{\ (resp., }$%
n(s)$\textit{) is constant.}
\end{enumerate}

\begin{proof}
Assume that $\Gamma (s)=(\gamma (s),v(s))$ is a smooth Legendre curve in $UT%
\mathbb{S}^{2}$ according to the RMF $\left \{ \eta ,\gamma ,v\right \} $
along the $\eta $-direction curve $\beta (s).$ Using Equation (\ref{8}) and $%
\Phi _{\left( \beta ,\gamma \right) }(s,u)=\beta (s)+u\gamma (s)$, we get%
\begin{eqnarray*}
\frac{\partial \Phi _{\left( \beta ,\gamma \right) }}{\partial s}(s,u)
&=&(1+u\text{ }m(s))\eta \text{,} \\
\frac{\partial \Phi _{\left( \beta ,\gamma \right) }}{\partial u}(s,u)
&=&\gamma \text{,} \\
\frac{\partial \Phi _{\left( \beta ,\gamma \right) }}{\partial s}(s,u)\wedge
\frac{\partial \Phi _{\left( \beta ,\gamma \right) }}{\partial u}(s,u)
&=&(1+u\text{ }m(s))v\text{.}
\end{eqnarray*}%
Singularities of the normal vector field of $\Phi _{\left( \beta ,\gamma
\right) }=\Phi _{\left( \beta ,\gamma \right) }(s,u)$ are%
\begin{equation*}
u=\frac{-1}{m(s)}\text{.}
\end{equation*}%
From Theorem 3.3 of the paper \cite{it}, we know that if there exists a
parameter $s_{0}$ such that $u_{0}=\frac{-1}{m(s_{0})}\neq 0$ and $%
u_{0}^{\prime }=\frac{m^{\prime }(s_{0})}{m^{2}(s_{0})}\neq 0$ (i.e., $%
m^{\prime }(s_{0})\neq 0$), then $\Phi (s,u)$ is locally diffeomorphic to
the $C\times \mathbb{R}\ $\ at $\Phi _{\left( \beta ,\gamma \right)
}(s_{0},u_{0})$. This completes the proof of Assertion 1.(a). Again from
Theorem 3.3 of \cite{it}, we know that if there exists a parameter $s_{0}$
such that $u_{0}=\frac{-1}{m(s_{0})}\neq 0$, $u_{0}^{\prime }=\frac{%
m^{\prime }(s_{0})}{m^{2}(s_{0})}=0$ and $(m(s_{0})^{-1})^{\prime \prime
}(s_{0})\neq 0$, then $\Phi _{\left( \beta ,\gamma \right) }$ is locally
diffeomorphic to $SW$ at $\Phi _{\left( \beta ,\gamma \right) }(s_{0},u_{0})$%
, and this completes the proof of Assertion 1.(b).\newline
The proof of Assertion 2 can be given similar to the proof of Assertion 1.
The proof of Assertion 3 can be given as: The singularity points are equal
to the striction curve of $\Phi $ and can be given by%
\begin{equation*}
\varphi _{\left( \beta ,\gamma \right) }(s)=\Phi _{\left( \beta ,\gamma
\right) }(s,\frac{-1}{m(s)})=\beta (s)-\frac{1}{m(s)}\gamma (s)
\end{equation*}

\begin{equation*}
\left( \text{resp., }\varphi _{\left( \beta ,v\right) }(s)=\Phi _{\left(
\beta ,v\right) }(s,\frac{-1}{m(s)})=\beta (s)-\frac{1}{m(s)}v(s)\right)
\text{.}
\end{equation*}%
Thus, we have%
\begin{equation*}
\varphi _{\left( \beta ,\gamma \right) }^{\prime }(s)=-(\frac{1}{m(s)}%
)^{\prime }\gamma (s)\text{ \  \ }\left( \text{resp, }\varphi _{\left( \beta
,v\right) }^{\prime }(s)=-(\frac{1}{m(s)})^{\prime }v(s)\right)
\end{equation*}%
which means that if $m(s)$ is a constant function, then%
\begin{equation*}
\varphi _{\left( \beta ,\gamma \right) }^{\prime }(s)=\varphi _{\left( \beta
,v\right) }^{\prime }(s)=0.
\end{equation*}%
So, $\Phi _{\left( \beta ,\gamma \right) }$ (resp., $\Phi _{\left( \beta
,v\right) }$) has only one singularity point and thus it is a cone surface.
\end{proof}

\begin{corollary}
\label{c1}Let $\alpha :I\subset \mathbb{R}\rightarrow \mathbb{R}^{3}$ be a
smooth curve with frame apparatus $\left \{ N_{1},N_{2},\kappa _{1},\kappa
_{2}\right \} $ given by Equation (\ref{4}). If we choose $\Gamma
(s)=(\gamma (s),v(s))=\Gamma (N_{1}(s),N_{2}(s)),$ we obtain the Theorem 3.1
given in \cite{hd}. If we choose $\Gamma (s)=(\gamma ,v)=\Gamma
(N_{2}(s),N_{1}(s))$, we obtain the Theorem 3.2 given in \cite{hd}$.$
\end{corollary}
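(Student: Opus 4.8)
The plan is to unwind the definitions so that the two special choices of Legendre pair match the hypotheses and conclusions of the cited theorems in \cite{hd}. First I would start from the curve $\alpha$ equipped with the frame apparatus $\{N_1,N_2,\kappa_1,\kappa_2\}$ satisfying Equation (\ref{4}), i.e. $B'=\bar\kappa_1 N_1+\bar\kappa_2 N_2$, $N_1'=-\bar\kappa_1 B$, $N_2'=-\bar\kappa_2 B$ — an RMF along the $B$-direction curve. By Theorem \ref{101} (Assertion 2) the pair $(N_1,N_2)$ is a Legendre curve in $UT\mathbb S^2$, so Theorem \ref{102} applies and the triple $\{N_1,N_2,\eta\}$ with $\eta=N_1\wedge N_2$ is an RMF along $\beta(s)=\int\eta(s)\,ds$. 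Substituting $\gamma=N_1$, $v=N_2$ into Equation (\ref{6}), the curvature functions $(m,n)$ of $\Gamma=(N_1,N_2)$ read off as $m(s)=\langle N_1'(s),\eta(s)\rangle$ and $n(s)=\langle N_2'(s),\eta(s)\rangle$; a short computation using $N_1'=-\bar\kappa_1 B$ and $N_2'=-\bar\kappa_2 B$ together with the definition of $\eta$ identifies these with $\bar\kappa_1$ and $\bar\kappa_2$ (up to the sign conventions forced by $\eta=N_1\wedge N_2$). Then Theorem \ref{104} applied to this $\Gamma$ produces exactly the singularity classification of the ruled surfaces $\beta+uN_1$, $\beta+uN_2$ in terms of $\bar\kappa_1$, $\bar\kappa_2$, which is the content of Theorem 3.1 of \cite{hd}.

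Second, I would repeat the argument with the roles of the two normal fields swapped, taking $\Gamma=(N_2,N_1)$. One checks directly from Equation (\ref{3}) that $\langle N_2',N_1\rangle=-\bar\kappa_2\langle B,N_1\rangle=0$, so $(N_2,N_1)$ is again Legendre and Theorem \ref{102} gives the RMF $\{N_2,N_1,\tilde\eta\}$ with $\tilde\eta=N_2\wedge N_1=-\eta$. The only effect of the swap is to interchange the curvature functions, $m\leftrightarrow n$ (again up to the orientation sign coming from $\tilde\eta=-\eta$), so that Theorem \ref{104} now yields the classification attached to $\beta+uN_2$, $\beta+uN_1$ with the roles of $\bar\kappa_1$ and $\bar\kappa_2$ exchanged; this is Theorem 3.2 of \cite{hd}.

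The routine part is the bookkeeping: verifying that under each substitution the frame Equation (\ref{6}) reproduces Equation (\ref{4}) with the right labels, and that the singularity conditions $u_0=-m(s_0)^{-1}$, $m'(s_0)\neq0$, etc. from Theorem \ref{104} translate into the corresponding conditions on $\bar\kappa_1,\bar\kappa_2$ stated in \cite{hd}. The main obstacle I expect is sign and orientation tracking: $\eta=\gamma\wedge v$ is not symmetric in its arguments, so the two choices $(N_1,N_2)$ and $(N_2,N_1)$ give opposite auxiliary vectors, and one must be careful that the resulting curvature functions $(m,n)$ carry the signs that make the identification with \cite{hd} literal rather than merely up to sign. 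Once that is pinned down, the corollary is just a matter of quoting Theorems \ref{102} and \ref{104} with the two specializations.
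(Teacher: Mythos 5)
Your proposal is correct and follows essentially the same route as the paper: identify $(N_1,N_2)$ as a Legendre pair via Theorem \ref{101}, pass to the associated RMF via Theorem \ref{102}, and specialize Theorem \ref{104} with $m=\kappa_1$, $n=\kappa_2$ (swapped for the second choice). The only divergence is that the paper's own proof works from Equation (\ref{2}) (the RMF $\{T,N_{1},N_{2}\}$ along the $T$-direction curve $\alpha=\int T\,ds$) rather than Equation (\ref{4}) cited in the statement --- an internal inconsistency of the paper, not a flaw in your argument --- and your extra care with the sign of $\eta=\gamma\wedge v$ under the swap addresses a point the paper glosses over.
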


\begin{proof}
Let $\alpha :I\subset \mathbb{R}\rightarrow \mathbb{R}^{3}$ be a smooth
curve with frame apparatus $\left \{ N_{1},N_{2},\kappa _{1},\kappa
_{2}\right \} $ given by Equation (\ref{2}). The vector fields $\left \{
T,N_{1},N_{2}\right \} $ is an RMF along the $T$-direction curve $\beta
(s)=\alpha (s)=\int T(s)ds$. This means that $\Gamma (N_{1}(s),N_{2}(s))$ is
a Legendre curve in $T\mathbb{S}^{2}$. Using Theorem \ref{104}, we complete
the proof, where $m(s)=\kappa _{1}(s)$ and $n(s)=\kappa _{2}(s).$
\end{proof}

\begin{theorem}
\label{105}Let $\Gamma (s)=(\gamma (s),v(s))$ be a smooth Legendre curve in $%
UT\mathbb{S}^{2}.$Then, according to RMF $\left \{ \eta ,\gamma ,v\right \} $
along the $\eta $-direction curve $\beta (s)$, we have the foolowing:
\end{theorem}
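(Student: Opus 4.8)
The plan is to run, essentially verbatim, the scheme used to prove Theorem \ref{104}, now applied to those ruled surfaces of Equation (\ref{8}) not yet treated, i.e. the ones whose base curve is $\gamma $ or $v$ — for instance $\Phi _{(\gamma ,v)}(s,u)=\gamma (s)+uv(s)$, $\Phi _{(v,\gamma )}(s,u)=v(s)+u\gamma (s)$, and the two surfaces directed by $\beta $. First I would differentiate each surface using the RMF Frenet-type equations (\ref{6}). As in Proposition \ref{103}, developability is automatic; moreover each first partial, and the cross product $\partial _s\Phi \wedge \partial _u\Phi $, factors as a scalar function of $(s,u)$ times a single frame vector, so the singular set is the zero set of that scalar. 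Concretely, $\partial _s\Phi _{(\gamma ,v)}=(m(s)+un(s))\,\eta (s)$ and $\partial _u\Phi _{(\gamma ,v)}=v(s)$, whence $\partial _s\Phi _{(\gamma ,v)}\wedge \partial _u\Phi _{(\gamma ,v)}=-(m(s)+un(s))\,\gamma (s)$ (using $\eta \wedge v=-\gamma $), so the singular points lie on $u=-m(s)/n(s)$ provided $n(s)\neq 0$; the other surfaces behave analogously with $n/m$, $m$ or $n$ replacing $m/n$.

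Next I would compute the striction curve of each surface and its derivative. Using (\ref{6}) again the result collapses: the striction curve of $\Phi _{(\gamma ,v)}$ is $\varphi (s)=\gamma (s)-\tfrac{m(s)}{n(s)}v(s)$, it coincides with the singular set, and $\varphi ^{\prime }(s)=-\bigl(\tfrac{m(s)}{n(s)}\bigr)^{\prime }v(s)$, so its tangent is parallel to the director $v$. Hence the entire classification is governed by the order of vanishing at $s_{0}$ of one scalar function $h$ (here $h=m/n$; correspondingly $n/m$, $m$, $n$ for the remaining surfaces). I would then invoke the same Theorem 3.3 of \cite{it} used for Theorem \ref{104}: if $h^{\prime }(s_{0})\neq 0$ the striction curve is regular and transverse to the rulings, so $\Phi $ is a cuspidal edge $C\times \mathbb{R}$ at $\varphi (s_{0})$; if $h^{\prime }(s_{0})=0$ but $h^{\prime \prime }(s_{0})\neq 0$ the striction curve has an ordinary singular point and $\Phi $ is a swallowtail $SW$ or a cuspidal crosscap $CCR$ at $\varphi (s_{0})$ according to the relevant nondegeneracy clause of \cite{it}; and if $h$ is constant then $\varphi ^{\prime }\equiv 0$, the striction curve reduces to a point, and $\Phi $ is a cone. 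This reproduces the structure of Theorem \ref{104} with $m$, $n$ replaced by $m/n$, $n/m$ (and $m$, $n$).

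The step I expect to be the real obstacle is separating the swallowtail from the cuspidal crosscap at a point with $h^{\prime }(s_{0})=0$: both types share the same first-order degeneration of the striction curve, so distinguishing them requires a careful second- and third-order jet expansion of $\Phi $ at $(s_{0},u_{0})$ — equivalently, checking precisely which hypothesis of \cite{it} is met — and there the finer coupling of the two curvatures in the RMF through $\eta ^{\prime }=-m\gamma -nv$ genuinely enters, unlike in Theorem \ref{104} where only a cuspidal edge or a swallowtail could appear. A subsidiary but error-prone point is the sign and orientation bookkeeping in the products $\eta \wedge \gamma =v$, $\eta \wedge v=-\gamma $, which must be kept consistent across all of the remaining surfaces; once that is in place, the statement follows from the routine computation outlined above.
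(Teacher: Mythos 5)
Your overall strategy---rerunning the computation of Theorem \ref{104} on the remaining surfaces of Equation (\ref{8}), reading the singular locus off the factorization of $\partial_{s}\Phi\wedge\partial_{u}\Phi$, and reducing the classification to the order of vanishing of a single scalar $h$---is exactly what the paper intends; it offers no separate proof, only the remark that the argument is ``similar to the proof of Theorem \ref{104}.'' Two corrections are needed, one cosmetic and one substantive. Cosmetically, the surfaces of Theorem \ref{105} are $\Phi_{(\gamma,\beta)}=\gamma+u\beta$ and $\Phi_{(v,\beta)}=v+u\beta$, governed by $h=m$ and $h=n$ with singular locus $u=-m(s)$ (resp.\ $u=-n(s)$), since $\partial_{s}\Phi_{(\gamma,\beta)}=(m(s)+u)\,\eta(s)$; your worked example $\Phi_{(\gamma,v)}$ with $h=m/n$ belongs to Theorem \ref{106}. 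Note also that here $\partial_{u}\Phi=\beta(s)=\int\eta\,ds$ is not a frame vector, so the cross product is $(m(s)+u)\,\eta(s)\wedge\beta(s)$ rather than a scalar times a frame vector, and you need $\eta(s_{0})$ and $\beta(s_{0})$ independent to conclude the singular set is exactly $u=-m(s)$.

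The substantive gap is your placement of the cuspidal crosscap. The theorem asserts CCR occurs precisely when $u_{0}=-m(s_{0})=0$ and $m'(s_{0})\neq 0$, i.e.\ when the singular point lies on the base curve itself, while the swallowtail occurs in the disjoint regime $u_{0}\neq 0$, $m'(s_{0})=0$, $m''(s_{0})\neq 0$. You instead lump SW and CCR together in the case $h'(s_{0})=0$, $h''(s_{0})\neq 0$, to be separated by ``the relevant nondegeneracy clause of \cite{it}.'' With that case division you would never detect the CCR points at all, because you only look for them where $h'$ vanishes, whereas the criterion in \cite{it} (and the statement you are proving) locates them where $h$ vanishes and $h'$ does not. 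Relatedly, your explanation of why CCR appears in Theorems \ref{105} and \ref{106} but not in Theorem \ref{104} (a ``finer coupling'' through $\eta'=-m\gamma-nv$) is off the mark: the reason is simply that in Theorem \ref{104} the singular locus $u=-1/m(s)$ can never meet $u=0$, so the singularity never reaches the directrix, while here $u=-m(s)$ can vanish, and $u_{0}=0$ is exactly the crosscap regime.
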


\begin{enumerate}
\item $\Phi _{\left( \gamma ,\beta \right) }(s,u)=\gamma (s)+u\beta (s)$%
\textit{\ which is locally diffeomorphic to;}

\begin{enumerate}
\item $C\times R\ $\textit{at }$\Phi _{\left( \gamma ,\beta \right)
}(s_{0},u_{0})$\textit{\ if and only if }$u_{0}=-m(s_{0})\neq 0$\textit{\
and }$m^{\prime }(s_{0})\neq 0.$

\item $SW$\textit{\ at }$\Phi _{\left( \gamma ,\beta \right) }(s_{0},u_{0})$%
\textit{\ if and only if }$u_{0}=-m(s_{0})\neq 0$\textit{, }$m^{\prime
}(s_{0})=0$\textit{\ and }$m^{\prime \prime }(s_{0})\neq 0$\textit{.}

\item $CCR$\textit{\ at }$\Phi _{\left( \gamma ,\beta \right) }(s_{0},u_{0})$%
\textit{\ if and only if }$u_{0}=-m(s_{0})=0$\textit{\ and }$m^{\prime
}(s_{0})\neq 0$\textit{.}
\end{enumerate}

\item $\Phi _{\left( v,\beta \right) }(s,u)=v(s)+u\beta (s)$\textit{\ which
is locally diffeomorphic to;}

\begin{enumerate}
\item $C\times R\ $\textit{\ at }$\Phi _{\left( v,\beta \right)
}(s_{0},u_{0})$\textit{\ if and only if }$u_{0}=-n(s_{0})\neq 0$\textit{\
and }$n^{\prime }(s_{0})\neq 0.$

\item $SW$\textit{\ at }$\Phi _{\left( v,\beta \right) }(s_{0},u_{0})$%
\textit{\ if and only if }$u_{0}=-n(s_{0}),$\textit{\ }$n^{\prime }(s_{0})=0$%
\textit{\ and }$n^{\prime \prime }(s_{0})\neq 0$\textit{.}

\item $CCR$\textit{\ at }$\Phi _{\left( v,\beta \right) }(s_{0},u_{0})$%
\textit{\ if and only if }$u_{0}=-n(s_{0})=0$\textit{\ and }$n^{\prime
}(s_{0})\neq 0$\textit{.}
\end{enumerate}

\item $\Phi _{\left( \gamma ,\beta \right) }(s,u)=\gamma (s)+u\beta (s)$%
\textit{\ (resp., }$\Phi _{\left( v,\beta \right) }(s,u)=v(s)+u\beta (s)$%
\textit{) which is a cone surface if and only if }$m(s)$\textit{\ (resp., }$%
n(s)$\textit{) is constant.}
\end{enumerate}

\bigskip Proofs of Theorems \ref{105} and \ref{106} can be given similar to
the proof of Theorem \ref{104}.

\begin{theorem}
\label{106}Let $\Gamma (s)=(\gamma (s),v(s))$ be a smooth Legendre curve in $%
UT\mathbb{S}^{2}$ with curvature functions $\left \{ m,n\right \} $. Then we
have the following:
\end{theorem}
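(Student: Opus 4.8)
The plan is to run, step by step, the argument already used in the proof of Theorem \ref{104}, now applied to the two ruled surfaces in (\ref{8}) left untreated by Theorems \ref{104} and \ref{105}, namely $\Phi_{\left(\gamma,v\right)}(s,u)=\gamma(s)+uv(s)$ and $\Phi_{\left(v,\gamma\right)}(s,u)=v(s)+u\gamma(s)$. First I would differentiate $\Phi_{\left(\gamma,v\right)}$ and reduce everything through Equation (\ref{6}), which gives $\gamma^{\prime}(s)=m(s)\eta(s)$ and $v^{\prime}(s)=n(s)\eta(s)$; thus
\[
\frac{\partial\Phi_{\left(\gamma,v\right)}}{\partial s}(s,u)=\left(m(s)+u\,n(s)\right)\eta(s),\qquad\frac{\partial\Phi_{\left(\gamma,v\right)}}{\partial u}(s,u)=v(s),
\]
and, since $\eta=\gamma\wedge v$ with $\left\vert v\right\vert=1$ and $<\gamma,v>=0$ yields $\eta\wedge v=-\gamma$, the normal of the surface is $-\left(m(s)+u\,n(s)\right)\gamma(s)$, so its singular locus is the curve $u=-m(s)/n(s)$.

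Next I would feed this singular locus into Theorem 3.3 of \cite{it}, exactly as in the proof of Theorem \ref{104}: writing $u_{0}=-m(s_{0})/n(s_{0})$, one gets the cuspidal edge $C\times\mathbb{R}$ at $\Phi_{\left(\gamma,v\right)}(s_{0},u_{0})$ when $u_{0}\neq 0$ and $s\mapsto -m(s)/n(s)$ has nonzero first derivative at $s_{0}$, the swallowtail $SW$ when that first derivative vanishes at $s_{0}$ while the second does not, and the cuspidal crosscap $CCR$ when $u_{0}=0$ (that is, $m(s_{0})=0$) under the remaining transversality hypothesis of \cite{it}. The surface $\Phi_{\left(v,\gamma\right)}$ is the mirror computation, obtained by interchanging $m$ and $n$ and using $\eta\wedge\gamma=v$ in place of $\eta\wedge v=-\gamma$: here $\frac{\partial\Phi_{\left(v,\gamma\right)}}{\partial s}(s,u)=\left(n(s)+u\,m(s)\right)\eta(s)$, $\frac{\partial\Phi_{\left(v,\gamma\right)}}{\partial u}(s,u)=\gamma(s)$, the normal is $\left(n(s)+u\,m(s)\right)v(s)$, the singular locus is $u=-n(s)/m(s)$, and the classification is read off \cite{it} in the same manner.

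For the cone statement I would examine the striction curve: substituting the singular value into $\Phi_{\left(\gamma,v\right)}$ gives $\varphi_{\left(\gamma,v\right)}(s)=\gamma(s)-\frac{m(s)}{n(s)}v(s)$, and by (\ref{6}) the two $\eta$-terms in $\varphi_{\left(\gamma,v\right)}^{\prime}(s)$ cancel, leaving $\varphi_{\left(\gamma,v\right)}^{\prime}(s)=-\left(m(s)/n(s)\right)^{\prime}v(s)$; this vanishes identically exactly when $m(s)/n(s)$ is constant, in which case the striction curve reduces to a single point, so $\Phi_{\left(\gamma,v\right)}$ is a cone, and likewise for $\Phi_{\left(v,\gamma\right)}$. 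There is no deep obstacle: the main thing to watch is getting the vector-product identities $\eta\wedge v=-\gamma$ and $\eta\wedge\gamma=v$ together with the attendant signs right, and correctly matching the derivative conditions of Theorem 3.3 of \cite{it} to the three singularity types; beyond that the argument is a routine transcription of the proof of Theorem \ref{104}.
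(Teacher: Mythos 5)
Your proposal is correct and follows exactly the route the paper itself prescribes: the paper gives no separate proof of Theorem \ref{106}, stating only that it ``can be given similar to the proof of Theorem \ref{104},'' and your computation of $\partial_s\Phi=(m+un)\eta$, the singular locus $u=-m/n$ (resp.\ $u=-n/m$), the appeal to Theorem 3.3 of \cite{it}, and the striction-curve argument for the cone case is precisely that transcription, with the vector-product identities and signs handled correctly. The only cosmetic discrepancy is that the paper's Assertion 3 phrases the cone condition for $\Phi_{(\gamma,v)}$ as ``$\tfrac{n}{m}$ constant'' while you obtain ``$\tfrac{m}{n}$ constant,'' which are equivalent where both quotients are defined.
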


\begin{enumerate}
\item \textit{Ruled surface }$\Phi _{\left( \gamma ,v\right) }(s,u)=\gamma
(s)+uv(s)$\textit{\ is locally diffeomorphic to;}

\begin{enumerate}
\item $C\times R\ $\textit{\ at }$\Phi _{\left( \gamma ,v\right)
}(s_{0},u_{0})$\textit{\ if and only if }$u_{0}=-\frac{m}{n}(s_{0})\neq 0$%
\textit{\ and }$(\frac{m}{n})^{\prime }(s_{0})\neq 0.$

\item $SW$\textit{\ at }$\Phi _{\left( \gamma ,v\right) }(s_{0},u_{0})$%
\textit{\ if and only if }$u_{0}=-\frac{m}{n}(s_{0})\neq 0$\textit{, }$(%
\frac{m}{n})^{\prime }(s_{0})=0$\textit{\ and }$(\frac{m}{n})^{\prime \prime
}(s_{0})\neq 0$\textit{.}

\item $CCR$\textit{\ at }$\Phi _{\left( \gamma ,v\right) }(s_{0},u_{0})$%
\textit{\ if and only if }$u_{0}=-\frac{m}{n}(s_{0})=0$\textit{\ (i.e., }$%
m(s_{0})=0$ \textit{and} $n(s_{0})\neq 0$\textit{)\ and }$(\frac{m}{n}%
)^{\prime }(s_{0})\neq 0$\textit{.}
\end{enumerate}

\item \textit{Ruled surface }$\Phi _{\left( v,\gamma \right)
}(s,u)=v(s)+u\gamma (s)$\textit{\ is locally diffeomorphic to;}

\begin{enumerate}
\item $C\times R\ $\textit{\ at }$\Phi _{\left( v,\gamma \right)
}(s_{0},u_{0})$\textit{\ if and only if }$u_{0}=-\frac{n}{m}(s_{0})\neq 0$%
\textit{\ and }$(\frac{n}{m})^{\prime }(s_{0})\neq 0.$

\item $SW$ \textit{at }$\Phi _{\left( v,\gamma \right) }(s_{0},u_{0})$%
\textit{\ if and only if }$u_{0}=-\frac{n}{m}(s_{0})\neq 0,$\textit{\ }$(%
\frac{n}{m})^{\prime }(s_{0})=0$\textit{\ and }$(\frac{n}{m})^{\prime \prime
}(s_{0})\neq 0$

\item $CCR$\textit{\ at }$\Phi _{\left( v,\gamma \right) }(s_{0},u_{0})$%
\textit{\ if and only if }$u_{0}=-\frac{n}{m}(s_{0})=0$\textit{\ (i.e., }$%
n(s_{0})=0$ \textit{and} $m(s_{0})\neq 0$\textit{)\ and }$(\frac{n}{m}%
)^{\prime }(s_{0})\neq 0$\textit{.}
\end{enumerate}

\item \textit{Ruled surfaces }$\Phi _{\left( \gamma ,v\right) }(s,u)=\gamma
(s)+uv(s)$\textit{\ (resp., }$\Phi _{\left( v,\gamma \right)
}(s,u)=v(s)+u\gamma (s)$\textit{) is a cone surface if and only if }$\frac{n%
}{m}(s)$\textit{\ (resp., }$\frac{m}{n}(s)$\textit{) is constant.}
\end{enumerate}

\begin{corollary}
\label{c2}Let $\alpha :I\subset \mathbb{R}\rightarrow \mathbb{R}^{3}$ be a
smooth curve with frame apparatus $\left \{ T,N,B,\kappa ,\tau \right \} $.
If we choose $\Gamma (s)=(\gamma (s),v(s))=\Gamma (B(s),T(s)),$ we obtain
the Theorem 3.2 given in \cite{it}.
\end{corollary}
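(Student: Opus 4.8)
The proof runs parallel to that of Corollary \ref{c1}: we specialize the abstract Legendre curve of Theorem \ref{106} to the concrete pair built from the Frenet--Serret apparatus of $\alpha$, compute its curvature functions, and then observe that the resulting singularity table is precisely that of Theorem 3.2 in \cite{it}.

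First I would check that $\Gamma(s)=(\gamma(s),v(s)):=(B(s),T(s))$ is a Legendre curve in $UT\mathbb{S}^{2}$. Since $\left\vert B\right\vert =\left\vert T\right\vert =1$ and $\langle B,T\rangle =0$, the map $\Gamma$ takes values in $UT\mathbb{S}^{2}$, and from the Frenet--Serret equations \eqref{1} we have $\gamma^{\prime}(s)=B^{\prime}(s)=-\tau(s)N(s)$, so that $\langle \gamma^{\prime}(s),v(s)\rangle =-\tau(s)\langle N(s),T(s)\rangle =0$; this is the Legendre condition \eqref{3} (equivalently Assertion 3 of Theorem \ref{101}, since $B^{\prime}=-\tau N$ and $T^{\prime}=\kappa N$ are both proportional to the velocity $N$ of the $N$-direction curve $\beta(s)=\int N(s)\,ds$, so $B$ and $T$ are RM vector fields along $\beta$). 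By Theorem \ref{102} the adapted frame is then $\{\eta,\gamma,v\}=\{N,B,T\}$, because $\eta(s)=\gamma(s)\wedge v(s)=B(s)\times T(s)=N(s)$; comparing the structure equations \eqref{6} with \eqref{1} gives $\gamma^{\prime}(s)=-\tau(s)\eta(s)$ and $v^{\prime}(s)=\kappa(s)\eta(s)$, hence $m(s)=-\tau(s)$ and $n(s)=\kappa(s)$, so $\frac{m}{n}(s)=-\frac{\tau}{\kappa}(s)$ and $\frac{n}{m}(s)=-\frac{\kappa}{\tau}(s)$.

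Next I would substitute these into Theorem \ref{106}. For the ruled surface $\Phi_{(\gamma,v)}(s,u)=\gamma(s)+uv(s)=B(s)+uT(s)$ the singular value $u_{0}=-\frac{m}{n}(s_{0})$ becomes $u_{0}=\frac{\tau}{\kappa}(s_{0})$, and Assertion 1 of Theorem \ref{106} reads: $\Phi_{(\gamma,v)}$ is locally diffeomorphic to $C\times\mathbb{R}$ when $\frac{\tau}{\kappa}(s_{0})\neq 0$ and $\big(\frac{\tau}{\kappa}\big)^{\prime}(s_{0})\neq 0$; to $SW$ when $\frac{\tau}{\kappa}(s_{0})\neq 0$, $\big(\frac{\tau}{\kappa}\big)^{\prime}(s_{0})=0$ and $\big(\frac{\tau}{\kappa}\big)^{\prime\prime}(s_{0})\neq 0$; to $CCR$ when $\tau(s_{0})=0$, $\kappa(s_{0})\neq 0$ and $\big(\frac{\tau}{\kappa}\big)^{\prime}(s_{0})\neq 0$; and it is a cone surface exactly when $\tau/\kappa$ is constant. (If Theorem 3.2 of \cite{it} is stated for the companion surface $\Phi_{(v,\gamma)}(s,u)=T(s)+uB(s)$, one instead invokes Assertion 2 of Theorem \ref{106} with $\frac{n}{m}=-\frac{\kappa}{\tau}$; the argument is identical.) These statements coincide with Theorem 3.2 of \cite{it}, which proves the corollary.

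I expect the only genuine difficulty to be bookkeeping: one must make sure that the orientation convention $B\times T=N$ (hence the signs $m=-\tau$, $n=\kappa$) agrees with the convention of \cite{it}, and that the ruled surface appearing there --- possibly written via the (modified) Darboux vector, or with a rescaled ruling parameter or a translated base curve --- is the same as $B(s)+uT(s)$ up to reparametrizations and affine transformations, which do not affect the local diffeomorphism type. Once the conventions are aligned, the comparison of the two singularity tables is mechanical.
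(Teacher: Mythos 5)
Your proof follows essentially the same route as the paper's: recognize $(B(s),T(s))$ as a Legendre curve in $UT\mathbb{S}^{2}$ via RM vector fields, then specialize the general singularity classification by expressing the curvature functions $m,n$ in terms of the Frenet apparatus. Your version is in fact more careful than the paper's two-line proof, which calls $\beta$ the $T$-direction curve rather than the $N$-direction curve $\int N(s)\,ds$ (compare Assertion 3 of Theorem \ref{101}) and cites Theorem \ref{105} where Theorem \ref{106} --- with $m/n=-\tau/\kappa$, as your explicit computation $\eta=N$, $m=-\tau$, $n=\kappa$ shows --- is the one actually needed to recover the $\tau/\kappa$ conditions of Theorem 3.2 in \cite{it}.
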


\begin{proof}
Since $T$ and $B$ are RM vector fields along the $T$-direction curve $\beta
(s)=\alpha (s)=\int T(s)ds$, the curve $\Gamma (B(s),T(s))$ is a Legendre in
$T\mathbb{S}^{2}$. Using Theorem 8 and taken $m(s)=\kappa _{1}(s)$, $%
n(s)=\kappa _{2}(s),$ we get the proof.
\end{proof}

\begin{corollary}
\label{c3}Let $\alpha :I\subset \mathbb{R}\rightarrow \mathbb{R}^{3}$ be a
smooth curve with frame apparatus $\left \{ N,C,W=\overline{D},f,g\right \} $%
, see \cite{bg, by}. If we choose $\Gamma (s)=(\gamma (s),v(s))=\Gamma
(W(s),N(s))$, we obtain the Theorem 3.3 given in \cite{it}, where%
\begin{equation*}
W(s)=\overline{D}(s)=\frac{\tau (s)T(s)+\kappa (s)B(s)}{\sqrt{\kappa
^{2}(s)+\tau ^{2}(s)}}
\end{equation*}%
is the unit Darboux vector field.
\end{corollary}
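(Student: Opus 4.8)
The plan is to argue in the same spirit as Corollaries \ref{c1} and \ref{c2}: realize the pair $(W,N)$ as a Legendre curve in $UT\mathbb{S}^{2}$ whose associated frame, in the sense of Theorem \ref{102}, is exactly the alternative frame $\{N,C,W\}$ of $\alpha $, and then quote the singularity classification of Theorem \ref{106}. The first step is to write down the structure equations of the frame apparatus $\{N,C,W,f,g\}$ (see \cite{bg, by}): with $C=W\wedge N=\frac{-\kappa T+\tau B}{\sqrt{\kappa ^{2}+\tau ^{2}}}$ one has
\begin{equation*}
N^{\prime }=fC,\qquad C^{\prime }=-fN+gW,\qquad W^{\prime }=-gC ,
\end{equation*}
where $f=\sqrt{\kappa ^{2}+\tau ^{2}}$ and $g$ is the rotation speed of $W$ inside the $\{T,B\}$-plane.

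The remaining steps, in order, are: (i) since $\left\vert W\right\vert =\left\vert N\right\vert =1$ and $<W,N>=\frac{\tau <T,N>+\kappa <B,N>}{\sqrt{\kappa ^{2}+\tau ^{2}}}=0$, the curve $\Gamma (s)=(W(s),N(s))$ lies in $UT\mathbb{S}^{2}$; (ii) because $W^{\prime }$ is a scalar multiple of $C$, it is orthogonal to $N$, so $<\gamma ^{\prime }(s),v(s)>=<W^{\prime }(s),N(s)>=0$, i.e. $\Gamma $ is a Legendre curve; (iii) by Theorem \ref{102} the triple $\{\eta ,\gamma ,v\}=\{C,W,N\}$, with $\eta =\gamma \wedge v=W\wedge N=C$, is an RMF along the $C$-direction curve $\beta (s)=\int C(s)ds$, and comparing the structure equations above with the Frenet type equations (\ref{6}) gives the curvature functions of $\Gamma $ as $m(s)=-g(s)$, $n(s)=f(s)$; (iv) applying Theorem \ref{106} to this Legendre curve and substituting $m=-g$, $n=f$ (so that $\frac{m}{n}=-\frac{g}{f}$, etc.), the local classification of $\Phi _{(\gamma ,v)}(s,u)=W(s)+uN(s)$ and $\Phi _{(v,\gamma )}(s,u)=N(s)+uW(s)$ is exactly Theorem 3.3 of \cite{it}.

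I expect the main obstacle to be bookkeeping rather than geometry: different references normalize the Darboux vector and define $g$ with different signs, and Equation (\ref{5}) fixes $(m,n)$ through inner products with $\eta =C$, so one must check that the signs relating $m,n$ to $f,g$ are precisely those for which the conditions $u_{0}=-\frac{m}{n}(s_{0})$, $(\frac{m}{n})^{\prime }(s_{0})=0$, $(\frac{m}{n})^{\prime \prime }(s_{0})\neq 0$ of Theorem \ref{106} coincide with the hypotheses of \cite{it}, Theorem 3.3. (If that theorem is instead stated for a ruled surface of the form $\beta +u\gamma $ or $\gamma +u\beta $, one invokes Theorem \ref{104} or Theorem \ref{105} in its place; the argument is unchanged.) Beyond this, the only substantive point is the Legendre identity $<W^{\prime },N>=0$, which is immediate from $W^{\prime }\parallel C\perp N$.
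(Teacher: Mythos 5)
Your proposal follows essentially the same route as the paper: it observes that $\Gamma=(W,N)$ is Legendre in $UT\mathbb{S}^{2}$ (via $W^{\prime}\parallel C\perp N$) and then invokes Theorem \ref{106}, identifying the curvature ratio $\frac{m}{n}$ with the slant-helix invariant $\sigma(s)=\frac{\kappa ^{2}}{(\kappa ^{2}+\tau ^{2})^{3/2}}\left(\frac{\tau }{\kappa }\right)^{\prime }$ up to sign, which is exactly what the paper's one-line proof does. Your extra details (the structure equations of $\{N,C,W\}$ and the explicit computation $m=-g$, $n=f$) only make the identification more transparent, and the sign ambiguity you flag is harmless since the hypotheses of Theorem \ref{106} are stated as vanishing/non-vanishing conditions.
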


\bigskip

\begin{proof}
Let $\alpha :I\subset \mathbb{R}\rightarrow \mathbb{R}^{3}$ be a smooth
curve with frame apparatus $\left \{ N,C,W=\overline{D},f,g\right \} $. Then,
the curve $\Gamma (s)=(\gamma (s),v(s))=\Gamma (W(s),N(s))$ is a Legendre in
$T\mathbb{S}^{2}$. Using Theorem 9, we get the slant helix condition
\begin{equation*}
\frac{m}{n}(s)=\left( \frac{\kappa ^{2}}{(\kappa ^{2}+\tau ^{2})^{\frac{3}{2}%
}}(\frac{\tau }{\kappa })^{\prime }\right) (s)=\sigma (s)
\end{equation*}%
which completes the proof.
\end{proof}

\bigskip

We close this section by given some examples to illustrate the main results.
The first example is an application of Theorem \ref{106}:

\begin{example}
Let us take a smooth curve $\gamma :I\subset \mathbb{R}\rightarrow \mathbb{R}%
^{3}$ given by%
\begin{equation*}
\gamma (s)=\frac{1}{\sqrt{2}}(-\cos (s),-\sin (s),1)
\end{equation*}%
and a unit vector given by%
\begin{equation*}
v(s)=\frac{1}{\sqrt{2}}(\cos (s),\sin (s),0).
\end{equation*}%
Then, we have%
\begin{equation*}
<\gamma ^{\prime }(s),v(s)>=0\text{.}
\end{equation*}%
Thus, $\Gamma (s)=(\gamma ,v)$ is a Legendre curve in $UT\mathbb{S}^{2}.$
The RMF $\left\{ \eta ,\gamma ,v\right\} $ along the $\eta $-direction curve
$\beta (s)=\int \eta (s)ds$ can be given as%
\begin{equation*}
\left(
\begin{array}{c}
\eta ^{\prime }(s) \\
\gamma ^{\prime }(s) \\
v^{\prime }(s)%
\end{array}%
\right) =\left(
\begin{array}{ccc}
0 & \frac{1}{\sqrt{2}} & -\frac{1}{\sqrt{2}} \\
-\frac{1}{\sqrt{2}} & 0 & 0 \\
\frac{1}{\sqrt{2}} & 0 & 0%
\end{array}%
\right) \left(
\begin{array}{c}
\eta (s) \\
\gamma (s) \\
v(s)%
\end{array}%
\right) \text{.}
\end{equation*}%
The ruled surface
\begin{equation*}
\Phi _{\left( v,\gamma \right) }(s,u)=v(s)+u\gamma (s)=\frac{1}{\sqrt{2}}%
(\cos (s)-u\cos (s),\sin (s)-u\sin (s),u)
\end{equation*}%
represents a cone surface, see Figure 2.
\end{example}

\begin{figure}[h]
\centering
% Requires \usepackage{graphicx}
\includegraphics[width=2.7342in,height=2.7296in]{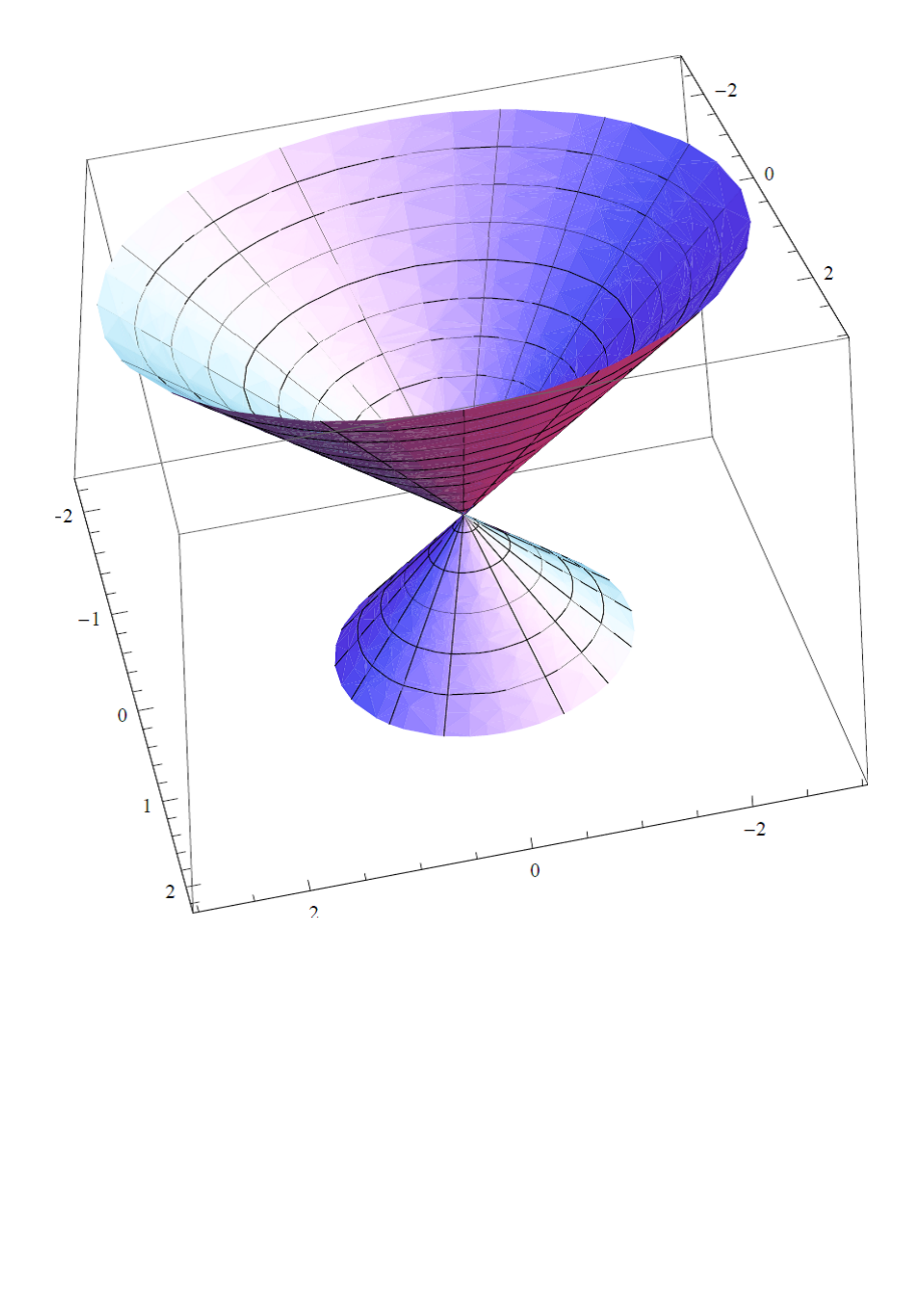}
\caption{The ruled surface $\Phi _{\left( v,\protect\gamma \right)}(s,u)$ is
the cone surface with one singularity point.}
\label{}
\end{figure}

The second example is an application of Theorem \ref{104}:

\begin{example}
Let $\alpha :I\subset \mathbb{R}\rightarrow \mathbb{R}^{3}$ be a smooth
curve defined by%
\begin{equation*}
\gamma (s)=(\cos (\frac{s}{\sqrt{2}}),\sin (\frac{s}{\sqrt{2}}),\frac{s}{%
\sqrt{2}})\text{.}
\end{equation*}%
Then, the tangent and binormal vector fields of $\alpha $\ are, respectively,%
\begin{eqnarray*}
T(s) &=&\frac{1}{\sqrt{2}}(-\sin (\frac{s}{\sqrt{2}}),\cos (\frac{s}{\sqrt{2}%
}),1)\text{,} \\
B(s) &=&\frac{1}{\sqrt{2}}(\sin (\frac{s}{\sqrt{2}}),\cos (\frac{s}{\sqrt{2}}%
),1)
\end{eqnarray*}%
with the curvature $\kappa =\frac{1}{2}$ and the torsion $\tau =\frac{1}{2}$%
. So, $\gamma $ is a helix. The curve $\Gamma (s)=(B,T)$ is Legendre in $UT%
\mathbb{S}^{2}$ and the ruled surface
\begin{eqnarray*}
\Phi _{\left( B,T\right) }(s,u) &=&B(s)+uT(s) \\
&=&\frac{1}{\sqrt{2}}((1-u)\sin (\frac{s}{\sqrt{2}}),(u+1)\cos (\frac{s}{%
\sqrt{2}}),1+u)
\end{eqnarray*}%
is a cone. We get the singularity point for $u=1$ on the point $\Phi
_{\left( B,T\right) }(s,1)=(0,0,\sqrt{2}),$ see Figure 3.

\begin{figure}[h]
\centering
% Requires \usepackage{graphicx}
\includegraphics[width=2.7572in,height=2.7333in]{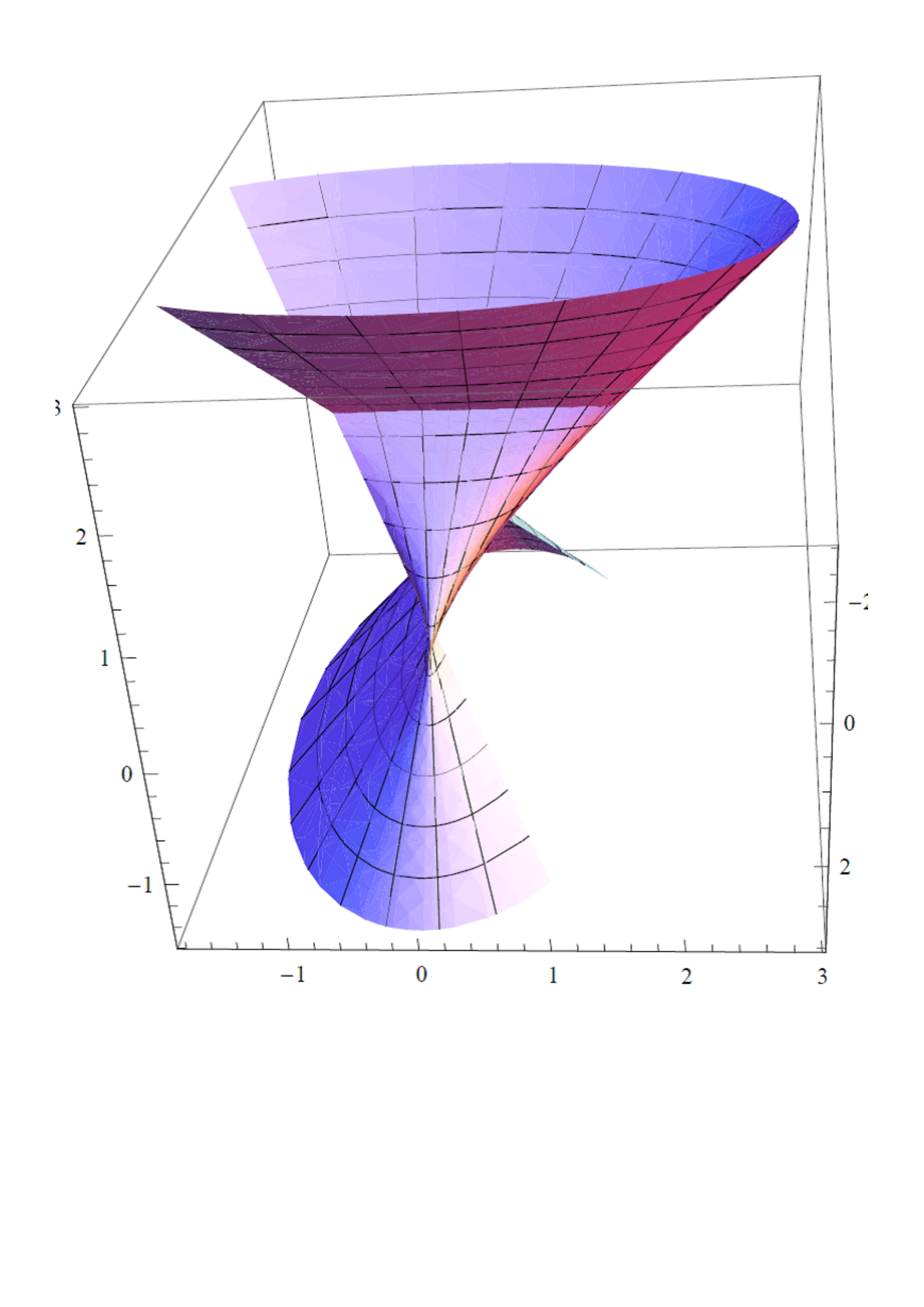}
\caption{The ruled surface $\Phi_{\left( B,T\right) }(s,u)$ is the cone
surface with helix singularity curve.}
\label{}
\end{figure}
\end{example}

The last example is an application of Theorem \ref{105}:

\begin{example}
Let $\alpha :I=\left[ 0,A\right] \rightarrow \mathbb{R}^{3}$ be a smooth
curve (for $0<A\leqslant 2\pi $) defined by%
\begin{eqnarray*}
\gamma (s) &=&\frac{1}{4}(3\cos (s)-\cos (3s),3\sin (s)-\sin (3s),2\sqrt{3}%
\cos (s))\text{,} \\
v(s) &=&\frac{1}{4}(3\sin (s)-\sin (3s),-3\cos (s)-\cos (3s),-2\sqrt{3}\sin
(s))\text{,} \\
\eta (s) &=&\frac{1}{2}(\sqrt{3}\cos (2s),\sqrt{3}\sin (2s),-1)\text{.}
\end{eqnarray*}%
Then, $\Gamma (s)=(\gamma (s),v(s))$ is a Legendre curve with Legendre
curvature function%
\begin{equation*}
m(s)=\sqrt{3}\sin (s)
\end{equation*}%
and we have the following:\newline
1. If $A=\pi $, then $m(\frac{\pi }{2})=\sqrt{3}\neq 0$, $m^{\prime }(\frac{%
\pi }{2})=0$ and $m^{\prime \prime }(\frac{\pi }{2})=-\sqrt{3}\neq 0.$ The
ruled surface
\begin{eqnarray*}
\Phi _{\left( \beta ,\gamma \right) }(s,u) &=&\beta (s)+u\gamma (s) \\
&=&(\frac{\sqrt{3}}{2}\sin (2s)+\frac{3}{4}u\cos (s)-\frac{1}{4}u\cos (3s),
\\
&&-\frac{\sqrt{3}}{2}\cos (2s)-\frac{3}{4}u\sin (s)-\frac{1}{4}u\sin (3s),-%
\frac{s}{2}+\frac{\sqrt{3}}{2}u\cos (s))
\end{eqnarray*}%
is locally diffeomorphic to $C\times \mathbb{R}$ at $\Phi _{\left( \beta
,\gamma \right) }(\frac{\pi }{2},\frac{-1}{\sqrt{3}})$, see Figure 4.
\begin{figure}[h]
\centering
% Requires \usepackage{graphicx}
\includegraphics[width=2.5106in,height=2.2in]{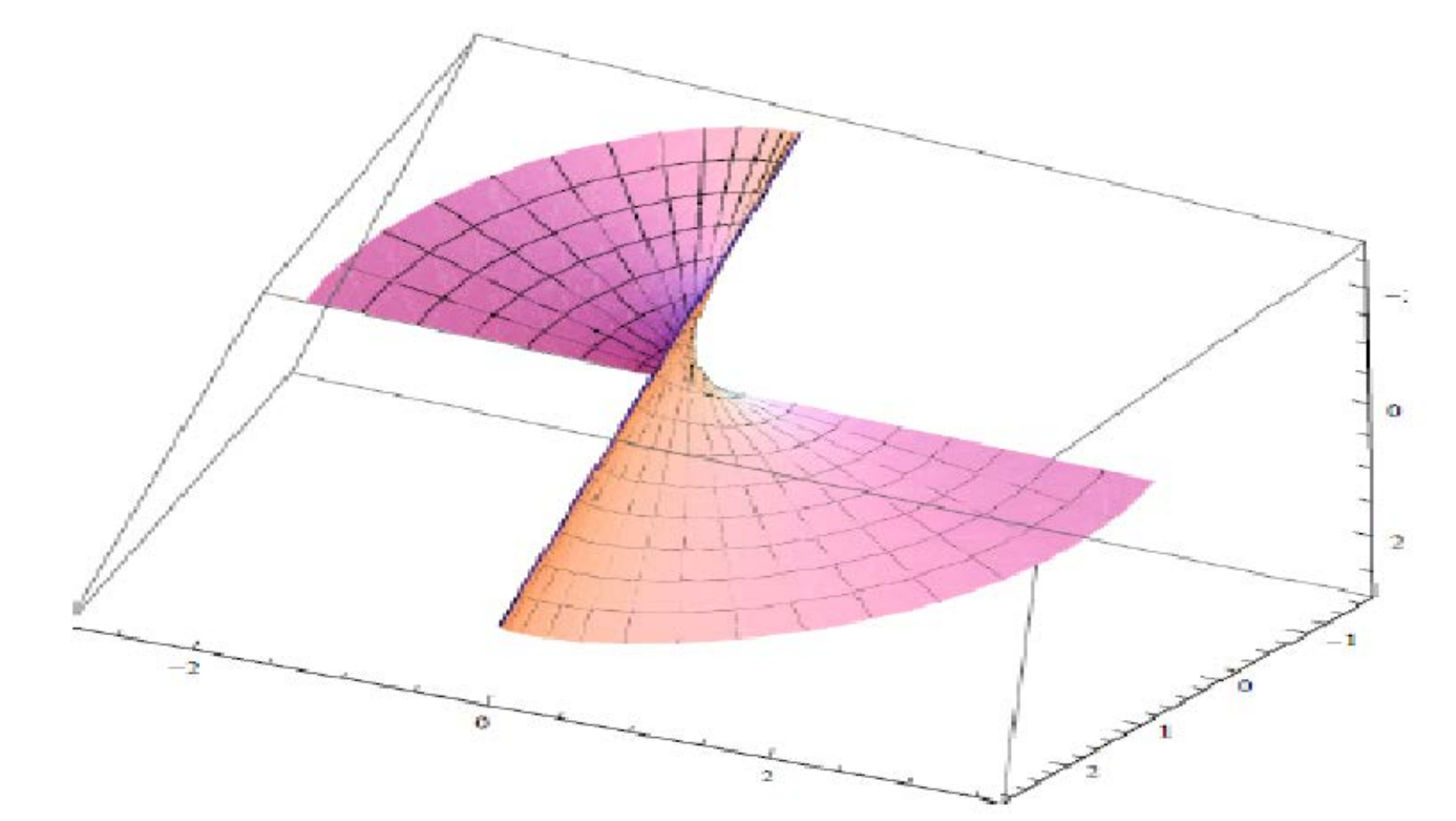}
\caption{\textmd{The cuspidal edge $C\times \mathbb{R}$ at $\Phi _{\left(
\protect\beta ,\protect\gamma \right) }(\frac{\protect\pi }{2},\frac{-1}{%
\protect\sqrt{3}})$}}
\end{figure}

2. If $A=\frac{\pi }{2}$, then $u_{0}=m^{-1}(s_{0})\neq 0$, $%
(m^{-1})^{\prime }(s_{0})\neq 0.$ The ruled surface $\Phi _{\left( \beta
,\gamma \right) }(s,u)$ is locally diffeomorphic to $SW$ at $\Phi _{\left(
\beta ,\gamma \right) }(\frac{\pi }{2},u_{0})$, see Figure 5.

\begin{figure}[h]
\centering
% Requires \usepackage{graphicx}
\includegraphics[width=2.6658in,height=2.2in]{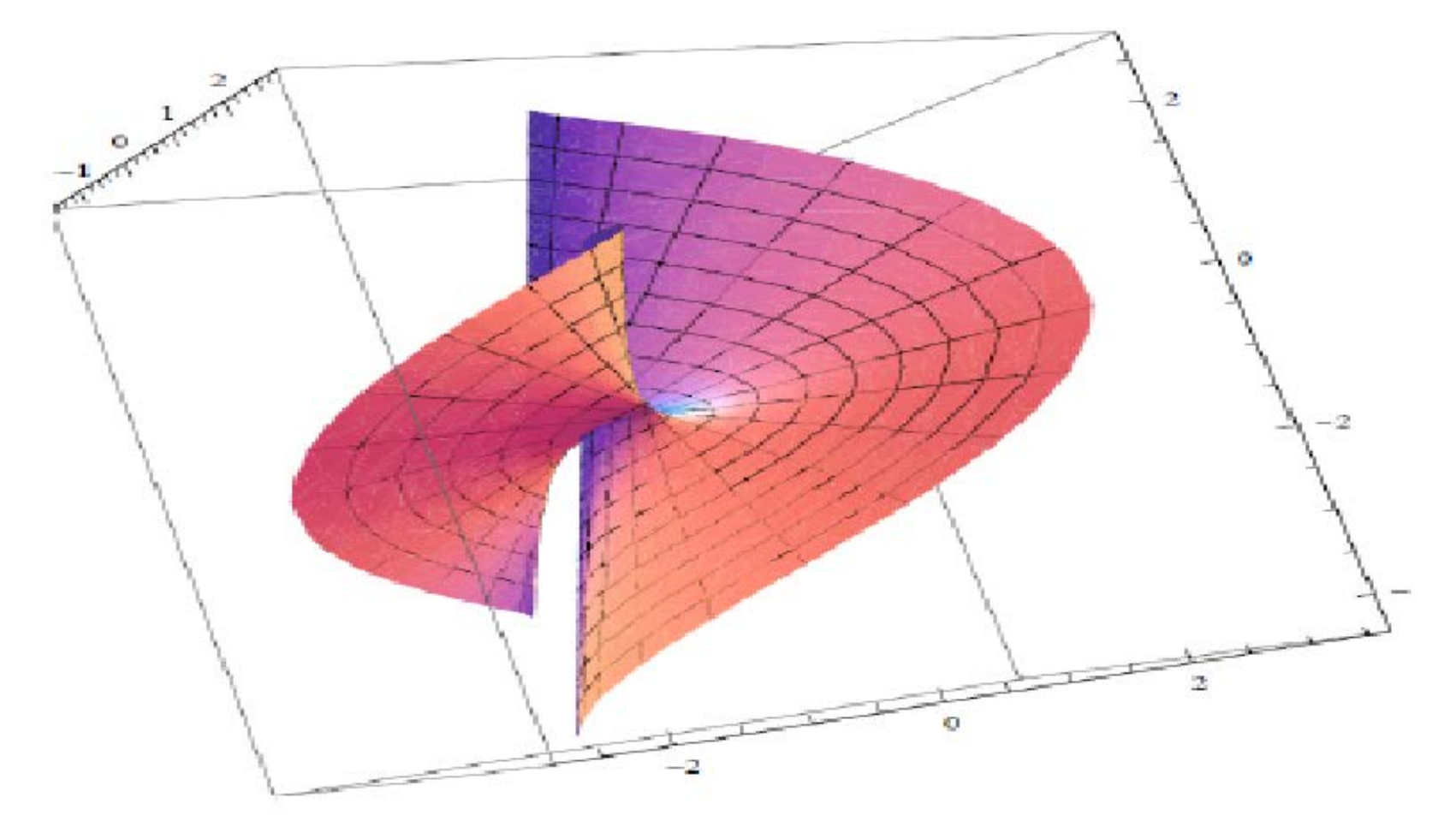}
\caption{\textmd{The swallowtail SW at $\Phi _{\left( \protect\beta ,\protect%
\gamma \right) }(\frac{\protect\pi }{2},u_{0})$}}
\end{figure}
\end{example}

\newpage

\section{Conclusions}

In this paper, we give the Legendre curves on the unit tangent bundle using
the rotation minimizing (RM) vector fields. We represent the ruled surfaces
corresponding to these Legendre curves and discuss their singularities. For
some special cases, given by Corollaries \ref{c1}, \ref{c2} and \ref{c3}, we
get the main ideas of the studies \cite{hd} and \cite{it}.

\end{document}